\newcounter{Scounter}
\newtheorem{thm}{Theorem}[section]
\newtheorem{cor}[thm]{Corollary}
\newtheorem{prop}[thm]{Proposition}
\newtheorem{definition}[thm]{Definition}
\newtheorem{lemma}[thm]{Lemma}
\newtheorem{ob}[thm]{Observation}
\newtheorem{con}[thm]{Conjecture}
\newtheorem{prob}{Problem}
\newcommand{\qed}{{$\quad\square$\vs{3.6}}}
\newcommand{\vs}[1]{\vspace*{#1 mm}}
\def\thanks#1{%
   \footnotemark
   \edef\@tempa{\noexpand\noexpand\noexpand\footnotetext[\the\c@footnote]}%
   \toks@\expandafter{\@thanks}%
   \toks\tw@{{#1}}
   \xdef\@thanks{\the\toks@\@tempa\the\toks\tw@}}
\begin{document}

\title{Snarks with special spanning trees}


\author{
Arthur Hoffmann-Ostenhof, Thomas Jatschka}

\date{}
\maketitle


\begin{abstract} 
Let $G$ be a cubic graph which has a decomposition into a spanning tree $T$ and a $2$-regular subgraph $C$, i.e. $E(T) \cup E(C) = E(G)$ and $E(T) \cap E(C) = \emptyset$. We provide an answer to the following question: which lengths can the cycles of $C$ have if $G$ is a snark? Note that $T$ is a hist (i.e. a spanning tree without a vertex of degree two) and that every cubic graph with a hist has the above decomposition. 
\end{abstract}

\noindent
{\bf Keywords:} 
cubic graph, snark, spanning tree, hist, 3-edge coloring.

\section{Introduction}

For terminology not defined here, we refer to \cite{BM}. All considered graphs are finite and without loops.
A \textit{cycle} is a $2$-regular connected graph. A \textit{snark} is a cyclically $4$-edge connected cubic graph of girth at least $5$ admitting no $3$-edge coloring. Snarks play a central role for several well known conjectures related to flows and cycle covers in graph theory, see \cite{Z1}. \\
A \textit{hist} in a graph is a spanning tree without a vertex of degree two (hist is an abbreviation for homeomorphically irreducible spanning tree, see \cite{ABHT}). A \textit{decomposition} of a graph $H$ is a set of edge disjoint subgraphs covering $E(H)$. Note that a cubic graph $G$ has a hist if and only if $G$ has a decomposition into a spanning tree and a $2$-regular subgraph. 
For an example of a cubic graph with a hist, respectively, with the above decomposition, see for instance 
Figure \ref{f:petersen} or Figure \ref{f:t888} where the dashed edges illustrate the $2$-regular subgraph.\\ 
In general connected cubic graphs need not have a hist. 
Even cubic graphs with arbitrarily high cyclic edge-connectivity do not necessarily have a hist, see \cite{HO}. 
We call a snark $G$ a \textit{hist-snark} if $G$ has a hist. At first glance hist-snarks may seem very special.
However, using a computer and \cite{BCGM} (see also \cite{BGHM}), we recognize the following,

\begin{thm}\label{s}
Every snark with less than $38$ vertices is a hist-snark. 
\end{thm}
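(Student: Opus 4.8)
The plan is to establish Theorem~\ref{s} by a direct, independently verifiable computer check over the complete census of small snarks. Because every cubic graph has an even number of vertices, ``less than $38$'' means ``at most $36$'', and the full lists of all cyclically $4$-edge-connected, girth-$5$, non-$3$-edge-colourable cubic graphs on $n\le 36$ vertices are available from \cite{BCGM} (see also \cite{BGHM}). The first step is to import these lists, after confirming that the generated graphs are exactly those meeting the definition of snark used here, so that the conclusion is neither weaker nor stronger than claimed.

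The second step is to recast ``has a Hist'' in a form suited to an efficient search. If $G$ is cubic and $T$ is a spanning tree, then $T$ is a Hist exactly when $\deg_T(v)\in\{1,3\}$ for every vertex $v$. Writing $O$ for the set of leaves of $T$ and $I=V(G)\setminus O$ for its degree-$3$ vertices, one checks that $T$ is a Hist iff $G[O]$ is $2$-regular --- its components being precisely the outer cycles --- each vertex of $O$ has exactly one neighbour in $I$ (which is automatic, as $G$ is cubic), and $G-E(G[O])$ is connected; an edge count then forces $|O|=\tfrac n2+1$. Hence the problem becomes: decide whether $V(G)$ admits a partition into sets $I$ and $O$ of sizes $\tfrac n2-1$ and $\tfrac n2+1$ such that $G[O]$ is a disjoint union of cycles and $G-E(G[O])$ is connected. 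I would implement this as a backtracking search that grows the inner tree one vertex at a time, pruning as soon as some vertex is forced to have degree $2$ in $T$; equivalently, one can feed the Boolean ``$G$ has a Hist'' to a SAT or ILP solver. For each snark the search returns an explicit Hist, which is recorded as a certificate that can be re-verified independently.

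The final step is to run this procedure over all snarks on at most $36$ vertices and to observe that a Hist is found in every single case, which is exactly the statement of Theorem~\ref{s}. The main obstacle is not conceptual but one of scale: the number of snarks on at most $36$ vertices runs into the tens of millions, so the Hist test must be fast on average. What makes this feasible is that snarks tend to carry many Hists, so a well-pruned backtracking search usually finds one almost immediately, and the few genuinely stubborn instances (if any) can be dispatched with a stronger solver or a short ad hoc argument. A secondary concern is bookkeeping hygiene: one must ensure that the imported census is complete and isomorphism-free (as guaranteed by \cite{BCGM}) and that the verification routine genuinely certifies the Hist property, so that the overall computation is trustworthy.
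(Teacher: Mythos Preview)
Your proposal is correct and is essentially the same as the paper's approach: the paper offers no proof beyond the sentence introducing the theorem, which simply records that the result was obtained by a computer check over the complete list of snarks on at most $36$ vertices from \cite{BCGM,BGHM}. Your write-up supplies the algorithmic reformulation and implementation outline that the paper omits, but the underlying method --- exhaustive verification that each snark in the census admits a Hist --- is identical.
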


Observe that not every snark has a hist. There are at least two snarks with $38$ vertices which do not have a hist, see $X_1$ and $X_2$ in Appendix A3. \\
The following definition is essential for the entire paper. 

\begin{definition}\label{d:outer}
Let $G$ be a cubic graph with a hist $T$. \\
{\bf(i)} An outer cycle of $G$ is a cycle of $G-E(T)$. \\
{\bf(ii)} Let $\{C_1,C_2,...,C_k\}$ be the set of all outer cycles of $G$ with respect to $T$, then we denote by $oc(G,T)=\{|V(C_1)|,|V(C_2)|,...,|V(C_k)|\}$.
\end{definition}

Note that all vertices of the outer cycles in Definition \ref{d:outer} are leaves of the hist. Observe also that $oc(G,T)$ is a multiset since several elements of $oc(G,T)$ are possibly the same number, see for instance the hist-snark in Figure \ref{f:t888}. 
If we refer to the outer cycles of a hist-snark, then we assume that the hist of the snark is given and thus the outer cycles are well defined (a hist-snark may have several hists). This paper answers the following type of problem.
For any $m \in \mathbb N$, is there a hist-snark with precisely one outer cycle such that additionally the outer cycle has length $m$? Corollary \ref{c:one} answers this question and the more general 
problem is solved by the main result of the paper: 


\begin{thm}\label{t:main}
Let $S=\{c_1,c_2,...,c_k\}$ be a multiset of $k$ natural numbers. Then there is a snark $G$ with a hist $T$ such that $oc(G,T)=S$ if and only if the following holds: \\(i) $c_1=6$ or $c_1 \geq 10$, if $k=1$. \\
(ii) $c_j \geq 5$ for $j=1,2,...,k$ if $k > 1$.
\end{thm}

One of our motivations to study hist-snarks is a conjecture on cycle double covers, 
see Conjecture \ref{c:cover}. Note that this conjecture has recently be shown to hold for certain classes of hist-snarks, see \cite{HZZ}. Special hist-snarks with symmetric properties can be found in \cite{HJ}. For examples of hist-snarks within this paper, see Figures \ref{f:loup}, \ref{f:petersen}, \ref{f:blan10}, \ref{f:t888} (dashed edges illustrate outer cycles). For a conjecture which is similar to the claim that every connected cubic graph $G$ has a decomposition into a spanning tree and a $2$-regular subgraph, see the $3$-Decomposition Conjecture in \cite{HKO}.

\section{On the lengths of outer cycles of hist-snarks}

To prove Theorem \ref{t:main}, we develop methods to construct hist-snarks. Thereby we use in particular modifications of the dot product
 and a handful of computer generated snarks. In all drawings of this section, dotted thin edges symbolize removed edges whereas dashed edges illustrate edges of outer cycles. The graphs which we consider may contain multiple edges.\\
The \textit{neighborhood} $N(v)$ of a vertex $v$ denotes the set of vertices adjacent to $v$ and does not include $v$ itself. The cyclic edge-connectivity of a graph $G$ is denoted by $\lambda_c(G)$. \textit{Subdividing} an edge $e$ means to replace $e$ by a path of length two. \\
We define the dot product (see Fig.\ref{f:dot}) which is a known method to construct snarks, see \cite{AT,I}. Let $G$ and $H$ be two cubic graphs. Let $e_1=a_1b_1$ and $e_2=a_2b_2$ be two independent edges of $G$ and let $e_3$ with $e_3=a_3b_3$ be an edge of $H$ with $N(a_3)-b_3=\{x_1,y_1\}$ and $N(b_3)-a_3=\{x_2,y_2\}$. The \textit {dot product} $G \cdot H$ is the cubic graph $$(G \cup H -a_3 - b_3 - \{e_1,e_2\}) \cup \{a_1x_1,b_1y_1,a_2x_2,b_2y_2\} \,\,.$$ 

Both results of the next lemma are well known. For a proof of Lemma \ref{l:3color}(i), see for instance 
\cite[p.~69]{Z1}. Lemma \ref{l:3color}(ii) is folklore (a published proof is available in \cite{AT}).
 
\begin{lemma}\label{l:3color}
Let $G$ and $H$ be both $2$-edge connected cubic graphs, then the following holds\\ 
(i) $G \cdot H$ is not $3$-edge colorable if both $G$ and $H$ are not $3$-edge colorable.\\
(ii) $G \cdot H$ is cyclically $4$-edge connected if both $G$ and $H$ are 
cyclically $4$-edge connected.
\end{lemma}


We use a modification of a dot product where $\{h_1,j_1,h_2,j_2\}$ is a set of four distinct vertices which is disjoint with $V(G) \cup V(H)$:

\begin{definition}\label{d:var}
Set $G(\underline{e_1},e_2) \bullet H(e_3):=\big(G \cdot H -\{a_1x_1,b_1y_1\}\big) \cup \{h_1,j_1,a_1h_1,h_1j_1,j_1b_1,h_1x_1,j_1y_1\} $, 
$G(e_1,\underline{e_2}) \bullet H(e_3):= \big(G \cdot H - \{a_2x_2,b_2y_2\}\big) \cup \{h_2,j_2,a_2h_2,h_2j_2,j_2b_2,h_2x_2,j_2y_2 \}$ 
and \\ $G(\underline{e_1},\underline{e_2}) \bullet H(e_3):= 
\big(G(\underline{e_1},e_2) \bullet H(e_3) -\{a_2x_2,b_2y_2\}\big) 
\cup \{h_2,j_2,a_2h_2,h_2j_2,j_2b_2,h_2x_2,j_2y_2\}$, see Fig.\ref{f:dot}.
 \end{definition}

\begin{figure}[htpb] 
\centering\epsfig{file=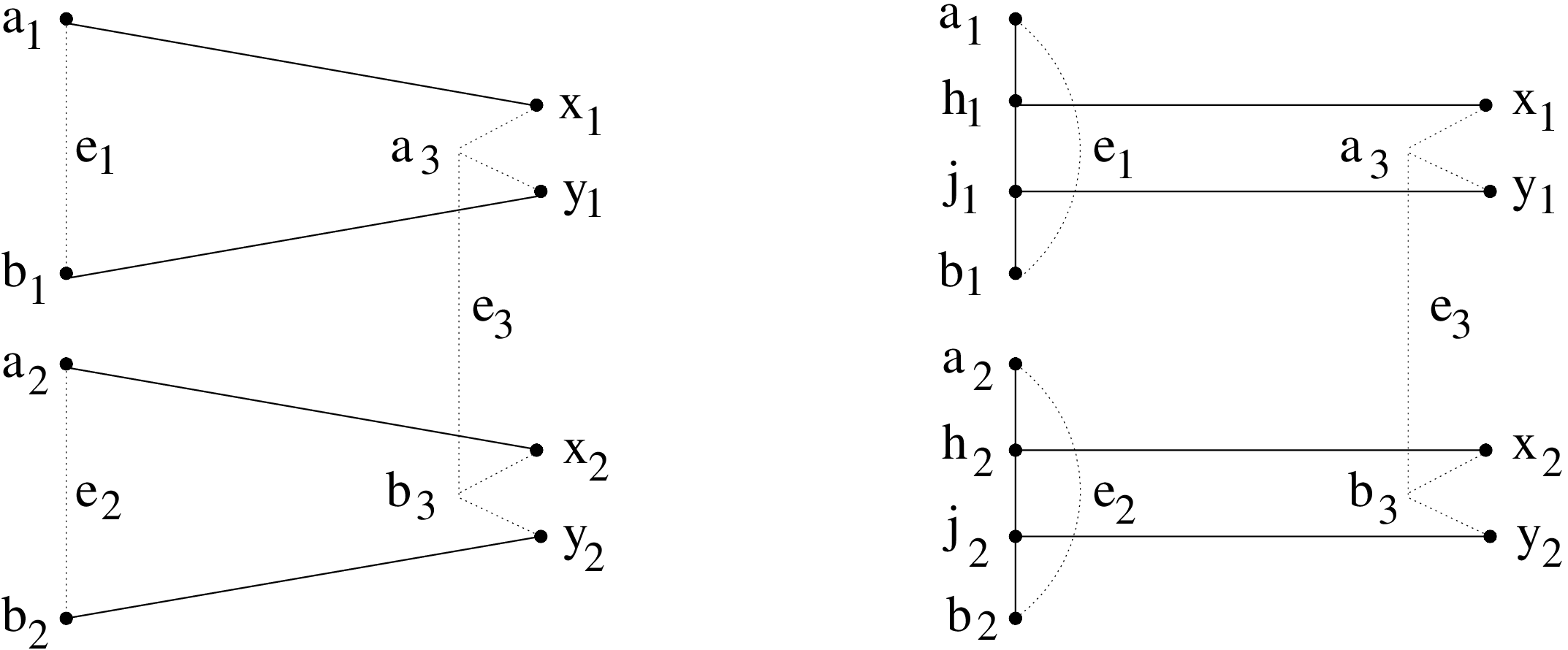,width=4.3in}
\caption{Constructing the dot product $G \cdot H$ and $G(\underline{e_1},\underline{e_2}) \bullet H(e_3)$, see Def. \ref{d:var}.}
\label{f:dot}
\end{figure}

The statement of the next proposition is well known, see \cite{W} or \cite[proof of Theorem 12]{A} (see also \cite{F1,F2}).

\begin{prop}\label{p:444} 
Let $H$ be a cubic graph which is constructed from a cubic graph $G$ by subdividing two independent edges in $G$
and by adding an edge joining the two $2$-valent vertices. Then $\lambda_c(H) \geq 4$ if $\lambda_c(G) \geq 4$.
\end{prop} 

The above result is used for the proofs of the subsequent two lemmas. 

\begin{lemma}\label{l:dot}
Suppose $G$ and $H$ are snarks. Define the three graphs: $B_1:=G(\underline{e_1},e_2) \bullet H(e_3)$,  $B_2:=G(e_1,\underline{e_2}) \bullet H(e_3)$ and $B_3=:G(\underline{e_1},\underline{e_2}) \bullet H(e_3)$. Then $B_i$ is a snark for $i=1,2,3$. 
\end{lemma}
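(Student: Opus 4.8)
The plan is to verify, for each $i\in\{1,2,3\}$, the four defining properties of a snark: $B_i$ is cubic, has girth $5$, is cyclically $4$-edge-connected, and has no proper $3$-edge-colouring. Cubicity is immediate, since each new vertex $h_k,j_k$ receives degree three and the degrees of $a_k,b_k,x_k,y_k$ are preserved. For girth and connectivity I would exploit that $B_i$ is obtained from $G\cdot H$ by \emph{subdividing} some of the edges $a_1x_1,b_1y_1,a_2x_2,b_2y_2$ (two of them for $B_1$ and $B_2$, all four for $B_3$) with the new vertices, and then joining the two subdivision vertices of each ``gadget'' by the edge $h_kj_k$. Since $G\cdot H$ is a snark, subdividing an edge neither lowers the girth nor spoils cyclic $4$-edge-connectivity, and adding an edge trivially preserves cyclic $4$-edge-connectivity; so for the girth it only remains to check that a new edge $h_kj_k$ lies on no short cycle. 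This follows from the distance estimates $\mathrm{dist}_{G\cdot H}(a_k,b_k)\ge 4$ and $\mathrm{dist}_{G\cdot H}(x_k,y_k)\ge 3$, both forced by the fact that $G$ and $H$ have girth $5$, which make every cycle through $h_kj_k$ have length at least $6$; cycles avoiding all gadgets are cycles of $G\cdot H$, hence have length at least $5$.

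The substantial part is showing that $B_i$ has no proper $3$-edge-colouring, which I would do by contradiction, following the template of the proof that $G\cdot H$ is uncolourable. Suppose $c$ is such a colouring. At each gadget the three edges at $h_k$ use all three colours, as do those at $j_k$; hence $c(h_kx_k)$ is determined by $c(a_kh_k)$ and $c(h_kj_k)$, $c(j_ky_k)$ is determined by $c(b_kj_k)$ and $c(h_kj_k)$, and one checks that $c(a_kh_k)=c(b_kj_k)$ iff $c(h_kx_k)=c(j_ky_k)$ (and when these differ, $c(h_kx_k)=c(b_kj_k)$ and $c(j_ky_k)=c(a_kh_k)$). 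Using that $e_1,e_2$ are independent and that the dot product adds no edge inside $V(G)$ or inside $V(H)\setminus\{a_3,b_3\}$, both $\partial(V(G))$ and $\partial\!\left(V(H)\setminus\{a_3,b_3\}\right)$ are $4$-edge-cuts of $B_i$. Now split into cases. If $c(a_kh_k)=c(b_kj_k)$ at every gadget present, then the parity lemma applied to $\partial(V(G))$ forces the two ``$e_2$-end'' cut edges to receive a common colour (and, by hypothesis, the two ``$e_1$-end'' ones likewise), whence $e_1$ and $e_2$ can be reinserted into $G$ with those colours to give a proper $3$-edge-colouring of $G$ -- impossible. Otherwise $c(a_kh_k)\ne c(b_kj_k)$ at some gadget; then the parity lemma applied to $\partial\!\left(V(H)\setminus\{a_3,b_3\}\right)$ forces its four colours to be two colours each used twice, with the colour pair on the $a_3$-side equal to that on the $b_3$-side, so reinserting $a_3,b_3$ and colouring $a_3b_3$ with the common third colour extends $c$ to a proper $3$-edge-colouring of $H$ -- again impossible. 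Hence $c$ cannot exist and $B_i$ is a snark.

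The main obstacle is the colour bookkeeping in this last step: one must correctly propagate the colours forced by each gadget, apply the parity lemma to the right cut in the right case, and check that the colouring recovered on $G$ (or $H$) is indeed proper at the reinserted edges $e_1,e_2$ (or $a_3x_1,a_3y_1,a_3b_3,b_3x_2,b_3y_2$). This is most delicate for $B_3$, where both gadgets are present: there one has to argue that a colour discrepancy at one gadget is transmitted, via the parity lemma on $\partial\!\left(V(H)\setminus\{a_3,b_3\}\right)$, to a discrepancy with the same colour pair at the other gadget, so that the reinserted edges at $a_3$ and at $b_3$ are simultaneously properly coloured and the colour assigned to $a_3b_3$ is consistent from both ends.
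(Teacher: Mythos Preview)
Your approach is correct and tracks the paper's proof closely for girth and cyclic $4$-edge-connectivity: both argue via the observation that $B_i$ is obtained from the snark $G\cdot H$ by subdividing two or four independent edges and then inserting the edges $h_kj_k$, and the paper likewise appeals to \cite{A} for the fact that this operation preserves cyclic $4$-edge-connectivity. One small caution on your girth sketch: a cycle through $h_kj_k$ may leave $h_k$ towards $a_k$ and enter $j_k$ from $y_k$ (or the symmetric variant), so the two distance estimates you state do not cover all cases; moreover the relevant distances live in $G\cdot H$ with the subdivided edges deleted, since the path cannot revisit $h_k,j_k$. Both points are immediate from the cut structure, so this is an easy patch rather than a gap.

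Where you genuinely diverge from the paper is in the non-$3$-edge-colourability. You run a direct parity-lemma analysis of a hypothetical colouring, splitting on whether $c(a_kh_k)=c(b_kj_k)$ at the gadgets and then reconstructing a colouring of $G$ or of $H$. The paper instead uses a one-line reduction: $B_1$ is literally the ordinary dot product $G_1\cdot H$, where $G_1$ is obtained from $G$ by replacing $e_1$ with a path $a_1h_1j_1b_1$ of length three and adding a \emph{parallel} edge (again called $e_1$) between $h_1$ and $j_1$. In any proper $3$-edge-colouring of $G_1$ the two parallel $h_1j_1$-edges take distinct colours, forcing $c(a_1h_1)=c(j_1b_1)$ and hence yielding a proper $3$-edge-colouring of $G$; so $G_1$ is not $3$-edge-colourable, and the standard dot-product theorem finishes at once (with the obvious analogues for $B_2$ and $B_3$). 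Your argument is self-contained and makes the mechanism explicit---in effect you are unpacking exactly why $G_1$ is uncolourable and why the dot product preserves this---while the paper's version buys brevity by packaging your gadget bookkeeping into the known behaviour of the dot product applied to a multigraph.
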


\begin{proof}
If $B_i$ with $i=1,2,3$ has a cycle of length less than five, then this cycle must contain precisely two edges of the cyclic $4$-edge cut $C_i$ of $B_i$ where $C_i$ is defined by the property that one component of $B_i-C_i$ is $H-a_3-b_3$. 
Since the endvertices of the edges of $C_i$ do not induce a $4$-cycle, $B_i$ has girth at least $5$. \\
It is not difficult to see that $B_i$ results from subdividing two (in the case $i=1,2$) or four (in the case $i=3$) independent edges of $G \cdot H$ (namely the edges of $C_i$) and adding one or two new edges. 
Since $\lambda_c(G \cdot H) \geq 4$ by Lemma \ref{l:3color}(ii) and since subdividing and adding edges as described keeps by Proposition \ref{p:444} the cyclic $4$-edge connectivity, $\lambda_c(B_i)\geq 4$.\\
It remains to show that $B_i$ is not $3$-edge colorable. 
Let $G_1$ be the cubic graph which is constructed from $G$ by (i) replacing $e_1 \in E(G)$ by a path of length three (no edge in our graph is now labeled $e_1$),
by (ii) removing the labels $a_1$, $b_1$, by (iii) adding a parallel edge (to make the graph cubic) and calling this new edge $e_1$, and by (iv) naming the 
endvertices of $e_1$, $a_1$ and $b_1$ such that $a_1$ is adjacent to the vertex whose label $a_1$ we removed in step (ii).   
Then $B_1 \cong G_1 \cdot H$. Since $G_1$ is by construction clearly not $3$-edge colorable and since $H$ is a snark, Lemma \ref{l:3color}(i) implies that $B_1$ is not $3$-edge colorable. The remaining cases $B_2$ and $B_3$ can be verified analogously. \qed
\end{proof}

Let $e_1,e_2$ in $G$ and $e_3$ in $H$ be defined as at the beginning of this section. 
Moreover, suppose that $N(b_1) =\{a_1,c,d\}$ and that all three neighbors of $b_1$ are distinct. We define another modification of the dot product, 
see also Fig.\ref{f:triangel}.

\begin{definition}\label{d:triangel}
Let $q_1$,$q_2$ be distinct vertices satisfying $\{q_1,q_2\} \cap V(G \cup H) = \emptyset$. Set \\
$G(e_1,e_2) \blacktriangle H(e_3):= \big(G \cup H -a_3 - b_3 - \{e_1,e_2,b_1c\}\big) \cup 
\{q_1,q_2,a_1q_1,q_1b_1,b_1q_2,q_2c,q_1x_1,q_2y_1,a_2x_2,b_2y_2\}$.
\end{definition}

\begin{figure}[htpb] 
\centering\epsfig{file=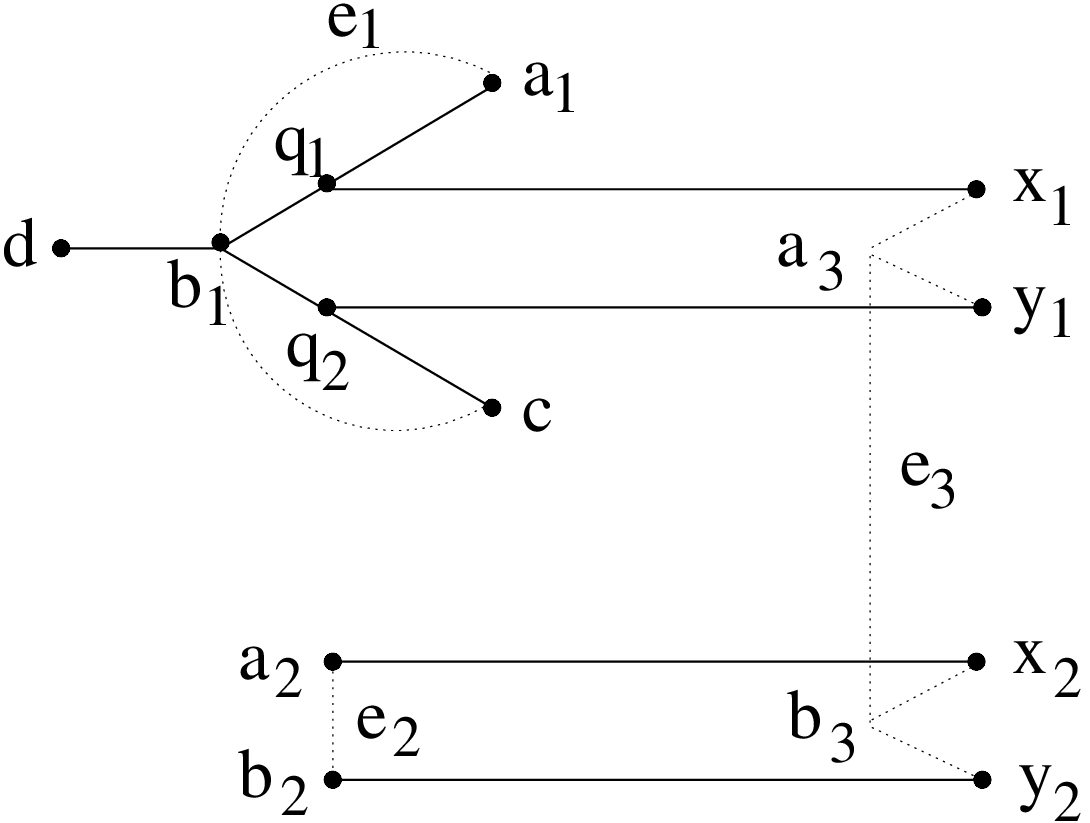,width=2.4in}
\caption{Constructing the graph $G(e_1,e_2) \blacktriangle H(e_3)$, see Def. \ref{d:triangel}.}
\label{f:triangel}
\end{figure}

\begin{lemma}\label{l:triangel}
Suppose $G$ and $H$ are snarks, then $G(e_1,e_2) \blacktriangle H(e_3)$ is a snark.
\end{lemma}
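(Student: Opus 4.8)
The plan is to mirror the proof of Lemma \ref{l:dot} as closely as possible, since $G(e_1,e_2) \blacktriangle H(e_3)$ is just another local modification of the dot product $G \cdot H$: instead of subdividing the edges of the cyclic $4$-edge cut, here one of those subdivided paths ($a_1 x_1$) is replaced by a triangle-type gadget on $q_1,q_2$ that also absorbs the edge $b_1 c$. So the proof splits, exactly as before, into three parts: girth $\geq 5$, cyclic $4$-edge connectivity $\geq 4$, and non-$3$-edge-colorability.

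For the girth, I would argue as in Lemma \ref{l:dot}: let $C$ be the cyclic $4$-edge cut of $B := G(e_1,e_2) \blacktriangle H(e_3)$ whose removal leaves $H - a_3 - b_3$ as one component (so $C = \{q_1 x_1, q_2 y_1, a_2 x_2, b_2 y_2\}$, the four edges crossing into the $H$-side). Any short cycle of $B$ either lies entirely on the $G$-side or entirely on the $H$-side or uses exactly two edges of $C$. Short cycles inside $H-a_3-b_3$ are ruled out because $H$ is a snark of girth $5$ and the new side has at worst a subdivided structure; short cycles on the $G$-side are ruled out because they would have to pass through the gadget $\{a_1,q_1,q_2,b_1,c\}$, which replaces two edges of $G$ ($e_1=a_1b_1$ and $b_1c$) by longer paths, so no new short cycle is created there and $G$ itself has girth $5$; and a cycle using two edges of $C$ would need those two edges' $H$-endpoints to be adjacent in $H - a_3 - b_3$, which (as in Lemma \ref{l:dot}) does not produce a $4$-cycle because $x_1,y_1,x_2,y_2$ together with $a_3,b_3$ do not induce a $4$-cycle in a snark $H$. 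Hence girth $B \geq 5$.

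For cyclic $4$-edge connectivity, I would observe that $B$ is obtained from $G \cdot H$ by a sequence of operations of the type "subdivide an edge" and "add an edge between two (degree-raised) vertices" — more precisely, one can realize $B$ from $G \cdot H$ by subdividing the four cut edges and then inserting the extra edges of the gadget, or alternatively realize it as a dot product of a modified $G$ with $H$ as in the colorability step below. Either way, since $\lambda_c(G \cdot H) \geq 4$ and, by the cited short proof of Theorem 12 in \cite{A}, subdividing edges and adding edges in this controlled fashion preserves $\lambda_c \geq 4$, we get $\lambda_c(B) \geq 4$.

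The non-$3$-edge-colorability is, as in Lemma \ref{l:dot}, most cleanly handled by exhibiting $B$ as a dot product $G' \cdot H$ where $G'$ is obtained from $G$ by a local modification at the triangle region: replace the edges $e_1 = a_1 b_1$ and $b_1 c$ by the gadget $\{a_1 q_1, q_1 b_1, b_1 q_2, q_2 c\}$ together with a suitable parallel/loop edge on $\{q_1, q_2\}$ that makes $G'$ play the role of "$G$ with a subdivided $e_1$" for the purposes of the dot product at edge $\{q_1 x_1, q_2 y_1\}$ — i.e. $G'$ has an edge (the would-be $e_1$) joining $q_1$ and $q_2$. One then checks that $G'$ is not $3$-edge colorable: in any proper $3$-edge coloring of $G'$, the colors forced along the path $a_1 q_1 q_2 c$ and around $b_1$ would induce a proper $3$-edge coloring of $G$ (by contracting the gadget back to the two original edges $e_1$ and $b_1c$), contradicting that $G$ is a snark. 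Since $G'$ is not $3$-edge colorable and $H$ is not $3$-edge colorable, the dot product $G' \cdot H = B$ is not $3$-edge colorable by the standard property of dot products cited above. The main obstacle — and the only genuinely new point compared with Lemma \ref{l:dot} — is getting this gadget-to-$G'$ translation right: one must verify that the three extra vertices $q_1, q_2$ of the triangle construction, together with $b_1$ and its old neighbor $c$, really do assemble into a legitimate dot-product input graph $G'$ that is cubic, not $3$-edge colorable, and has the right pair of "half-edges" $q_1 x_1$, $q_2 y_1$ for gluing to $H$; once that bookkeeping is done, colorability and connectivity follow from the already-established facts about dot products, and girth is the short direct check above.

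\begin{proof}
We verify the three defining properties of a snark for $B := G(e_1,e_2) \blacktriangle H(e_3)$.

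Let $C := \{q_1x_1, q_2y_1, a_2x_2, b_2y_2\}$; this is an edge cut of $B$ one side of which is $H - a_3 - b_3$. Suppose $B$ contains a cycle $Z$ of length at most $4$. If $Z$ avoids $C$, then $Z$ lies in $H - a_3 - b_3$ or in the graph obtained from $G$ by replacing $e_1 = a_1 b_1$ by the path $a_1 q_1 b_1$ and $b_1 c$ by the path $b_1 q_2 c$; in the former case $Z$ would be a short cycle of the snark $H$ (possibly through the pendant structure at $a_3, b_3$), which is impossible, and in the latter case $Z$ would correspond to a cycle of $G$ of length at most $4$, again impossible since $G$ is a snark. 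If $Z$ uses exactly two edges of $C$, then the two $H$-endpoints of these edges (a subset of $\{x_1, y_1, x_2, y_2\}$) would be joined by a path of length at most two in $H - a_3 - b_3$, so that $\{x_1, y_1, x_2, y_2\}$ together with $a_3, b_3$ would induce a $4$-cycle in $H$; since $H$ is a snark this does not occur. Hence $B$ has girth at least $5$.

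Next, $B$ arises from $G \cdot H$ by subdividing the four independent edges of the cyclic $4$-edge cut of $G \cdot H$ separating $H - a_3 - b_3$, together with the addition of the edge $q_1 q_2$ and a relocation of the edge $b_1 c$ to $q_2 c$; each such step is of the type "subdivide an independent edge" or "add an edge," which by the short proof of Theorem 12 in \cite{A} preserves cyclic $4$-edge connectivity at least $4$. Since $\lambda_c(G \cdot H) \geq 4$, we conclude $\lambda_c(B) \geq 4$.

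Finally we show $B$ is not $3$-edge colorable. Let $G'$ be the cubic graph obtained from $G$ as follows: delete the edges $e_1 = a_1 b_1$ and $b_1 c$, add the new vertices $q_1, q_2$, and add the edges $a_1 q_1$, $q_1 b_1$, $b_1 q_2$, $q_2 c$, together with the edge $q_1 q_2$ (which we take as a new copy of $e_1$). Then $G'$ is cubic, and $B = G' \cdot H$ with $e_1 = q_1 q_2$ playing the role of the removed edge $a_1 x_1, b_1 y_1 \mapsto q_1 x_1, q_2 y_1$. We claim $G'$ is not $3$-edge colorable. Indeed, given a proper $3$-edge coloring of $G'$, contract the path $a_1 q_1 b_1$ back to the edge $e_1 = a_1 b_1$ and the path $b_1 q_2 c$ back to the edge $b_1 c$; the parity constraints at $q_1$ and $q_2$ force the colors to propagate consistently, yielding a proper $3$-edge coloring of $G$, contrary to $G$ being a snark. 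Hence $G'$ is not $3$-edge colorable, and since $H$ is not $3$-edge colorable either, the dot product $B = G' \cdot H$ is not $3$-edge colorable by the standard property of the dot product (see \cite{I}).

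Combining the three facts, $B = G(e_1,e_2) \blacktriangle H(e_3)$ is a snark. \qed
\end{proof}
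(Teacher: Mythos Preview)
Your girth argument and your non-$3$-edge-colorability argument are essentially the paper's: in particular, your auxiliary graph $G'$ (delete $e_1$ and $b_1c$, insert $q_1,q_2$ and the five edges $a_1q_1,q_1b_1,b_1q_2,q_2c,q_1q_2$) is exactly the paper's $\tilde G$, namely $G$ with the vertex $b_1$ expanded to a triangle $q_1b_1q_2$, and $B=\tilde G\cdot H$ then gives non-colorability via the dot product.

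The gap is in your cyclic $4$-edge connectivity step. Your description of how $B$ is obtained from $G\cdot H$ is not correct: $B$ has only two new vertices $q_1,q_2$, so you cannot have subdivided four edges; there is no edge $q_1q_2$ in $B$ (that edge exists in $\tilde G$ but is removed when forming the dot product); and ``relocating $b_1c$ to $q_2c$'' is not one of the operations covered by Theorem~12 of \cite{A}, so you cannot invoke it. Nor can you fall back on ``$B=\tilde G\cdot H$ is a dot product of snarks'', because $\tilde G$ contains a triangle and hence has $\lambda_c(\tilde G)=3$, so the standard dot-product connectivity result does not apply.

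The correct realisation, which you did not find, is this: start from $X:=G\cdot H$, subdivide the edge $a_1x_1$ (call the new vertex $q_1$) and the edge $b_1d$ (call the new vertex $q_2$), then relabel $b_1\leftrightarrow q_2$, and finally add the single edge $b_1q_1$. A direct check shows the resulting graph is exactly $B$. Thus $B$ is obtained from $G\cdot H$ by subdividing two independent edges and adding one edge joining the two subdivision vertices, which is precisely the operation handled by the proof of Theorem~12 in \cite{A}, giving $\lambda_c(B)\ge 4$.
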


\begin{proof}
By the same arguments as in the proof of Lemma \ref{l:dot}, the girth of $G(e_1,e_2) \blacktriangle H(e_3)$ is at least five. Set $X:=G \cdot H$. 
Subdivide in $X$ the edge $a_1x_1$ and call the obtained $2$-valent vertex
$q_1$, then subdivide the edge $b_1d$ and call this now obtained $2$-valent vertex $q_2$, then 
exchange the labels $b_1$ and $q_2$ and finally add $b_1q_1$. 
Thus we obtain the graph $Y:=G(e_1,e_2) \blacktriangle H(e_3)$. Since $\lambda_c(X) \geq 4$ by Lemma \ref{l:3color}(ii), and since $Y$ is obtained from $X$ by subdividing two independent edges and by adding an edge joining these vertices, 
it follows from Proposition \ref{p:444} that $\lambda_c(Y) \geq 4$. It remains to show that $Y$ is not $3$-edge colorable. 
Let $\tilde G$ be the cubic graph which
is obtained from $G$ by (i) expanding $b_1$ to a triangle, by (ii)  
removing the labels $e_1$, $a_1$ and $b_1$, by (iii) calling the edge of the triangle which has no endvertex adjacent to $d$, $e_1$, and by (iv) naming the endvertices of $e_1$, $a_1$ and $b_1$ such that $a_1$ is adjacent to the vertex whose label $a_1$ we removed in step (ii). 
Then $\tilde G$ is clearly not $3$-edge colorable. By Lemma \ref{l:3color}(i) and since $Y \cong \tilde G \cdot H$, $Y$ is not $3$-edge colorable. \qed
\end{proof}

The next result shows that two hist-snarks can be combined to generate new hist-snarks.

\begin{thm}\label{t:union}
Let $G$ and $H$ be snarks with hists $T_G$ and $T_H$, then the following holds:\\
(i) There is a snark $G'$ with a hist $T'$
such that $oc(G',T')=oc(G,T_G) \cup \,oc(H,T_H)$. \\
(ii) Suppose $k \in oc(G,T_G)$ and $l \in oc(H,T_H)$, then there is a snark $\hat G$ with a hist 
$\hat T $ such that $oc(\hat G, \hat T)= \big(oc(G, T_G) \cup \,oc(H,T_H) \cup \{k+l-1\}\big) - \{k,l\}$.
\end{thm}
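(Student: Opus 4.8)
The plan is to realize both parts of the theorem as applications of the dot-product variants developed in Lemmas \ref{l:dot} and \ref{l:triangel}, choosing the independent edges $e_1,e_2$ of $G$ and the edge $e_3$ of $H$ so that they sit inside the Hists $T_G$ and $T_H$ in a controlled way. For part 1, I would first pick $e_1,e_2$ to be two independent edges of $G$ that are edges of $T_G$ joining degree-three vertices of $T_G$, and pick $e_3$ an edge of $T_H$ of the same kind. Then I would argue that the plain dot product $G \cdot H$ already carries a Hist: the trees $T_G - e_1 - e_2$ (three components, or fewer if the edges share behavior — one checks they give three subtrees whose leaves are leaves of $T_G$) and $T_H - a_3 - b_3$ reconnect through the four new edges $\{a_1x_1, b_1y_1, a_2x_2, b_2y_2\}$ into a single spanning tree of $G \cdot H$ all of whose internal vertices have degree three and whose leaves are exactly the leaves of $T_G$ together with the leaves of $T_H$ other than $a_3,b_3$; since $x_1,y_1,x_2,y_2$ were leaves of $T_H$, they acquire degree two if we are not careful, which is exactly why the $\bullet$ and $\blacktriangle$ variants insert extra vertices. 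So more precisely I would use $B_3 = G(\underline{e_1},\underline{e_2}) \bullet H(e_3)$ or one of its siblings from Lemma \ref{l:dot}: the inserted paths $a_1 h_1 j_1 b_1$ and $a_2 h_2 j_2 b_2$ and the pendant-free attachment of $h_i, j_i$ to $x_i, y_i$ can be organized so that $h_1,j_1,h_2,j_2$ become degree-three tree-vertices while $x_1,y_1,x_2,y_2$ stay leaves; the outer cycles are untouched, so $oc(B_3, T') = oc(G,T_G) \cup oc(H,T_H)$. Snarkhood of $B_3$ is given by Lemma \ref{l:dot}.

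For part 2, the point is to fuse a chosen outer cycle $C_G$ of $G$ (with $|V(C_G)| = k$) with a chosen outer cycle $C_H$ of $H$ (with $|V(C_H)| = l$) into a single outer cycle of length $k + l - 1$ in the new snark. Here I would select $e_1$ and $e_2$ so that $e_1$ is an edge of $C_G$ and $e_2$ is an edge of $T_G$ (as in part 1), and pick $e_3$ to be an edge of $C_H$. When we form the dot product and delete $e_1 = a_1 b_1$ and $e_3 = a_3 b_3$, the path $C_G - e_1$ (a path on $k$ vertices) and the path $C_H - \{a_3,b_3\}$ (a path on $l-2$ vertices) get spliced together by the two new edges $a_1 x_1$ and $b_1 y_1$ — using that $x_1, y_1$ are the $C_H$-neighbors of $a_3, b_3$ respectively — to form a single cycle. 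Counting vertices: $k$ from $C_G$, plus $x_1, y_1$ and the remaining $l-4$ interior vertices of $C_H$, i.e.\ $k + (l-2) = k + l - 2$; to land on $k + l - 1$ I would instead route one of the deleted endpoints back in, or equivalently use the $\blacktriangle$-variant of Lemma \ref{l:triangel}, which inserts exactly one extra vertex ($q_1$ or $q_2$) onto the splicing path, giving $k + l - 1$. One must also check that the $e_2$-side insertion (the $h_2, j_2$ gadget) stitches the two Hists together correctly, exactly as in part 1, and that $e_3 \in C_H$ does not collide with the girth/connectivity bookkeeping — but that bookkeeping is already subsumed in Lemmas \ref{l:dot} and \ref{l:triangel}, which do not care which edges $e_1, e_2, e_3$ are, only that they are independent (resp. that $N(b_1)$ is a triangle-free neighborhood).

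The main obstacle, and the step I would spend the most care on, is the precise vertex-degree bookkeeping for the spanning tree $T'$ (resp. $\hat T$) at the gadget vertices: one has to verify that after deleting $a_3, b_3$ and the tree/cycle edges and re-attaching through the $h_i, j_i$ (or $q_i$) vertices, the resulting spanning subgraph is (a) connected, (b) acyclic, and (c) has no vertex of degree two — while simultaneously the complementary outer cycles behave as claimed (disjoint union in part 1, one merged cycle of the right length in part 2). The cleanest way to handle (a)–(c) is to write down, for each of the finitely many local configurations around $a_1, b_1, a_2, b_2, x_i, y_i$ (and $a_3, b_3, q_i$ in part 2), exactly which incident edges land in $T'$, and then invoke that a connected spanning subgraph with $|V|-1$ edges is a tree. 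The girth, cyclic $4$-edge-connectivity, and non-$3$-edge-colorability are not obstacles at all: they are delivered wholesale by Lemma \ref{l:dot} and Lemma \ref{l:triangel}. I would also remark that part 1 is essentially the $k=l$-free special case of the construction in part 2 with the cycle-merging turned off, so presenting part 2's gadget first and then specializing may shorten the exposition.
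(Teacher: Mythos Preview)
Your plan for part 1 is correct and is exactly what the paper does: take $e_1,e_2\in E(T_G)$, take $e_3\in E(T_H)$ with both endpoints internal in $T_H$, form $G'=G(\underline{e_1},\underline{e_2})\bullet H(e_3)$, and splice the trees through the $h_i,j_i$ vertices. (Minor point: $x_1,y_1,x_2,y_2$ need not be leaves of $T_H$; what matters is that their tree-degree is preserved when $a_3x_i$, $b_3y_i$ are replaced by $h_ix_i$, $j_iy_i$.)

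Part 2, however, has a genuine gap rooted in a mislabeling. By definition $\{x_1,y_1\}=N(a_3)\setminus\{b_3\}$ and $\{x_2,y_2\}=N(b_3)\setminus\{a_3\}$: both $x_1,y_1$ are neighbours of $a_3$, not ``the $C_H$-neighbours of $a_3,b_3$ respectively'' as you write. Consequently, with your choice $e_3\in E(C_H)$, only one of $x_1,y_1$ lies on $C_H$ (the other is the tree-neighbour of $a_3$), so the edges $a_1x_1,b_1y_1$ do \emph{not} splice $C_G-e_1$ onto the path $C_H-\{a_3,b_3\}$, and your length count $k+l-2$ is already based on a configuration that does not arise. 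The further fixes you propose compound the confusion: the $\blacktriangle$-variant inserts \emph{two} new vertices $q_1,q_2$, not one, and it carries no $h_2,j_2$ gadget on the $e_2$-side (that belongs to the $\bullet$-variant).

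The paper's choice avoids all of this by reversing the roles. Take $e_1\in E(T_G)$ with $b_1$ internal in $T_G$ (so $b_1c\in E(T_G)$ as well), take $e_2\in E(C_G)$ on the outer cycle of length $k$, and take $e_3\in E(T_H)$ with $a_3$ internal in $T_H$ and $b_3$ a \emph{leaf} of $T_H$ lying on the outer cycle of length $l$. Then $\{x_2,y_2\}=N(b_3)\setminus\{a_3\}$ are precisely the two $C_H$-neighbours of $b_3$, so the plain $e_2$-side edges $a_2x_2,\,b_2y_2$ of the $\blacktriangle$-product join the path $C_G-e_2$ (on $k$ vertices) to the path $C_H-b_3$ (on $l-1$ vertices) into a single outer cycle of length $k+l-1$ --- no extra vertex needed for the count. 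The $\blacktriangle$-gadget on the $e_1$-side lives entirely inside the tree: $a_1q_1,q_1b_1,b_1q_2,q_2c,q_1x_1,q_2y_1$ replace the removed tree edges and give $q_1,q_2$ degree three, while $x_1,y_1$ (the former tree-neighbours of $a_3$) keep their degrees. This is where your degree bookkeeping becomes a one-line check rather than a case analysis.
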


\begin{proof}
First we prove (i).
Choose two edges $e_1,e_2 \in E(T_G)$ and choose an edge $e_3 \in E(T_H)$ such that all four adjacent edges are part of $T_H$. Set $G':=G(\underline{e_1},\underline{e_2}) \bullet H(e_3)$. Then $G'$ is a snark by Lemma \ref{l:dot}.
Let $T'$ be the subgraph of $G'$ with $E(T')=(E(T_G) \cup E(T_H)-\{e_1,e_2,e_3,a_3x_1,a_3y_1,b_3x_2,b_3y_2\}) \cup \{a_1h_1,h_1j_1,j_1b_1,a_2h_2,h_2j_2,j_2b_2,h_1x_1,j_1y_1,h_2x_2,j_2y_2\}$. \\ Since $T_H-a_3-b_3$ consists of four components, it follows that $T'$ is acyclic. It is straightforward to verify that $T'$ is a hist of $G'$. Since every outer cycle of $G'$ is an outer cycle of $G$ or $H$, the proof of (i) is finished. 

Using Lemma \ref{l:triangel}, 
we define a snark $\hat G=G(e_1,e_2) \blacktriangle H(e_3)$ where $e_1,e_2,e_3$ are chosen to satisfy the following properties. In $G$, let $e_1 \in E(T_G)$, $b_1c \in E(T_G)$ and let $e_2 \in E(G)-E(T_G)$ be part 
of an outer cycle of length $k$, see Fig.\ref{f:triangel}. In $H$, let $b_3$ be a leaf of an outer cycle of length $l$ and let $e_3,a_3x_1,a_3y_1 \in E(T_H)$. Let $\hat T$ be the the subgraph of $\hat G$ with 
$E(\hat T):= (E(T_G) \cup E(T_H)-\{e_1,b_1c,e_3,a_3x_1,a_3y_1\}) \cup \{a_1q_1,q_1b_1,b_1q_2,q_2c,q_1x_1,q_2y_1\}$. It is straightforward to verify that $\hat T$ is a hist of $\hat G$. Note that $a_2x_2,b_2y_2$ are contained in an outer cycle of length $k+l-1$. Hence, $oc(\hat G, \hat T)= \big(oc(G, T_G) \cup \,oc(H,T_H)-\{k\}-\{l\}\big) \cup \{k+l-1\}$ which finishes the proof. \qed
\end{proof}

\begin{lemma}\label{l:reduction}
Let $G$ be a snark with a hist $T_G$ and let $k \in oc(G,T_G)$, then each of the following statements holds:\\
(i) there is a snark $G'$ with a hist $T'$ such that $oc(G',T')=\big(oc(G,T_G)-\{k\}\big) \cup \{k+4\}$.\\
(ii) there is a snark $G'$ with a hist $T'$ such that $oc(G',T')=oc(G,T_G) \cup \{5\}$.\\
(iii) there is a snark $G'$ with a hist $T'$ such that $oc(G',T')=oc(G,T_G) \cup \{6\}$.\\
(iv) there is a snark $G'$ with a hist $T'$ such that $oc(G',T')=\big(oc(G,T_G)-\{k\}\big) \cup \{k+2\} \cup \{7\}$.
\end{lemma}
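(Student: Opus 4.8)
The plan is to prove all four parts by exhibiting an explicit small Hist-snark and then combining it with $G$ using the two operations already established, namely the dot-product variant $\bullet$ (Lemma \ref{l:dot}, used via Theorem \ref{t:union}.1) and the triangle variant $\blacktriangle$ (Lemma \ref{l:triangel}, used via Theorem \ref{t:union}.2). Concretely, one takes $H$ to be a fixed base snark with a known Hist whose outer-cycle multiset is as small as possible (the Petersen graph is the natural candidate, and presumably a Hist of it has a single outer cycle of length $5$, as suggested by Figure \ref{f:petersen}). Then: for part (ii) apply Theorem \ref{t:union}.1 with this $H$ to get $oc(G',T')=oc(G,T_G)\cup\{5\}$ immediately; for part (iii) one needs a base snark with a Hist having a single outer $6$-cycle, which can be obtained from Petersen (or another small snark) by applying part (ii)-type reasoning or by a direct small construction, or by noting that Lemma \ref{l:reduction}(i) turns a $5$ into... wait, that would give $9$, so instead one exhibits a concrete snark such as the $18$-vertex flower snark or a dot-product of two Petersens with a suitably chosen Hist realizing $\{6\}$ (or $\{5,5\}$ and then using part (i) on nothing helps, so a direct small example is cleanest).

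For part (i), the idea is to take $H = $ Petersen with its outer $5$-cycle, pick $k\in oc(G,T_G)$, and apply Theorem \ref{t:union}.2 with this $k$ and $l=5$: the result has $oc = oc(G,T_G)\cup\{5\}-\{k\}-\{5\}\cup\{k+5-1\}=oc(G,T_G)-\{k\}\cup\{k+4\}$, exactly as required. For part (iv), similarly start from a base Hist-snark whose outer cycles are $\{5,5\}$ (e.g. a dot product $P\cdot P$ of two Petersen graphs, whose Hist built via Theorem \ref{t:union}.1 has $oc=\{5,5\}$), then apply Theorem \ref{t:union}.2 using $k\in oc(G,T_G)$ on the $G$-side and $l=5$ on the $H$-side, obtaining $oc(G,T_G)\cup\{5,5\}-\{k\}-\{5\}\cup\{k+4\}=oc(G,T_G)-\{k\}\cup\{k+4\}\cup\{5\}$ — but this gives $k+4$ and a leftover $5$, not $k+2$ and $7$. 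So part (iv) instead requires a base snark whose Hist has an outer $3$-cycle — impossible since snarks have girth $5$ — or rather one should run the $\blacktriangle$-operation twice, or use a base snark with outer cycles $\{5,5\}$ where the two merges interact: merge $k$ with $5$ to get $k+4$ is wrong; one wants a base piece contributing $7$ and shortening $k$ by... Actually the clean route is: take $H_0$ a Hist-snark with $oc(H_0,T_{H_0})=\{5,5\}$, apply Theorem \ref{t:union}.2 twice, first merging the two $5$'s of $H_0$ is not allowed (they must be from different graphs); instead merge one outer $5$-cycle of $H_0$ with an auxiliary Petersen to get a Hist-snark with $oc=\{5,9\}$, no good either. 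The correct base object for (iv) is a Hist-snark $H^\ast$ with $oc(H^\ast,T^\ast)=\{7\}$ together with an internal structure allowing $k\mapsto k+2$; this is produced by applying $\blacktriangle$ to Petersen and Petersen with the parameters arranged so that $5+5-1-1$... — the honest plan is to search the small Hist-snarks (available by Theorem \ref{s} and the cited computer data \cite{BCGM,BGHM}) for ones realizing the needed small $oc$-multisets $\{5\}$, $\{6\}$, $\{7\}$, $\{5,5\}$, verify each by inspection of an explicit drawing, and then invoke Theorem \ref{t:union} to transfer the effect onto an arbitrary $G$.

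The main obstacle is locating and certifying the correct "gadget" snarks with the exact outer-cycle multisets needed, and, for part (iv) in particular, arranging the operation so that a single application produces simultaneously the replacement $k\mapsto k+2$ \emph{and} a brand-new outer $7$-cycle; I expect this needs a bespoke small construction (a dot-product or triangle-product of two Petersen graphs with a carefully chosen Hist) rather than a direct appeal to Theorem \ref{t:union} with an off-the-shelf $H$. The girth-$5$ constraint (no outer cycles of length $3$ or $4$) is what forces the shifts to come in the specific sizes $+2$, $+4$, $5$, $6$, $7$ seen here, so the verification that each gadget has girth $5$ and is not $3$-edge-colorable is handled exactly as in Lemmas \ref{l:dot} and \ref{l:triangel}, leaving only the routine check that the prescribed edge set $E(T')$ is acyclic, spanning, and homeomorphically irreducible.
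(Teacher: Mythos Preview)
Your plan rests on a false premise: the Petersen graph does \emph{not} have a Hist with an outer $5$-cycle. A $1,3$-tree on $10$ vertices has exactly $6$ leaves (solve $a+3b=2(a+b-1)$ with $a+b=10$), so every Hist of $P_{10}$ has $oc(P_{10},T)=\{6\}$; the paper states this explicitly just before Corollary~\ref{c:one}, and Figure~\ref{f:petersen} is captioned ``outer cycle of length $6$''. More generally $\{5\}^*=\emptyset$, since a Hist-snark with a single outer $5$-cycle would have only $8$ vertices. Consequently your derivations of (i) and (ii) via Theorem~\ref{t:union} collapse: for (i) you would need an $H$ with $5\in oc(H,T_H)$ and nothing else (so that $k+l-1=k+4$ without leftover cycles), and for (ii) you would need $oc(H,T_H)=\{5\}$; neither exists. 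Your route does give (iii) once you realise $P_{10}$ itself is the required base with $oc=\{6\}$, and indeed that is exactly how the paper proves (iii).

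The gap is conceptual: Theorem~\ref{t:union} is too coarse a tool for (i), (ii), (iv). In Theorem~\ref{t:union} the Hist $T'$ is prescribed once $T_G$ and $T_H$ are fixed, so the effect on $oc$ is rigidly determined by $oc(H,T_H)$. The paper instead goes back to the raw dot-product and $\bullet$-product and \emph{chooses afresh} which edges of the Petersen (or Blanu\v{s}a) piece enter $T'$ and which become outer-cycle edges. Thus in (i) the ordinary dot product $G\cdot P_{10}$ is taken with $e_2$ on the outer $k$-cycle and the Petersen side arranged so that a $4$-path of $P_{10}$ is spliced into that cycle, giving $k+4$ with no new cycles; in (ii) the product $G(\underline{e_1},e_2)\bullet P_{10}(e_3)$ is taken with both $e_1,e_2\in E(T_G)$ and the Petersen side arranged so that one of its $5$-cycles survives intact as a new outer cycle; and in (iv) the product $G(\underline{e_1},e_2)\bullet B_{18}(e_3)$ with the Blanu\v{s}a snark is arranged so that the $B_{18}$-side contributes a new outer $7$-cycle while two of its vertices are inserted into the old $k$-cycle to make it $k+2$. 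None of these three effects is achievable by quoting Theorem~\ref{t:union} with any auxiliary Hist-snark $H$.
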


\begin{proof}
The endvertices of the edges $e_1$, $e_2$ and $e_3$ and their neighbors are labeled as defined in the beginning of this section. The Petersen graph is denoted by $P_{10}$.

{\bf (i)} 
Let $\hat C$ be the $5$-cycle, "the inner star" of the illustrated $P_{10}$ in 
Fig.\ref{f:4}. Set $U:=P_{10}-E(\hat C)$ and let $e_3 \in E(U)$ with $e_3=a_3b_3$ where 
$b_3 \in V(\hat C)$. In $G$, we choose two independent edges $e_1 \in E(T_G)$ and $e_2 \not\in E(T_G)$ where $e_2$ is part of an outer cycle $C_k$ of length $k$. Then $G':=G \cdot P_{10}$ is a snark and the subgraph $T'$ of $G'$ with 
$E(T'):=(E(T_G)-e_1) \cup \{a_1x_1,b_1y_1\} \cup E(U-a_3)$ is a hist of $G'$, see Fig.\ref{f:4}. Since $G$ and $G'$ have the same outer cycles with the only exception that $C_k$ 
is in $G$ and that $C'_{k+4}:=(C_k-e_2) \cup \{a_2x_2,b_2y_2\} \cup (\hat C-b_3)$ is in $G'$, the statement follows.

\begin{figure}[htpb] 
\centering\epsfig{file=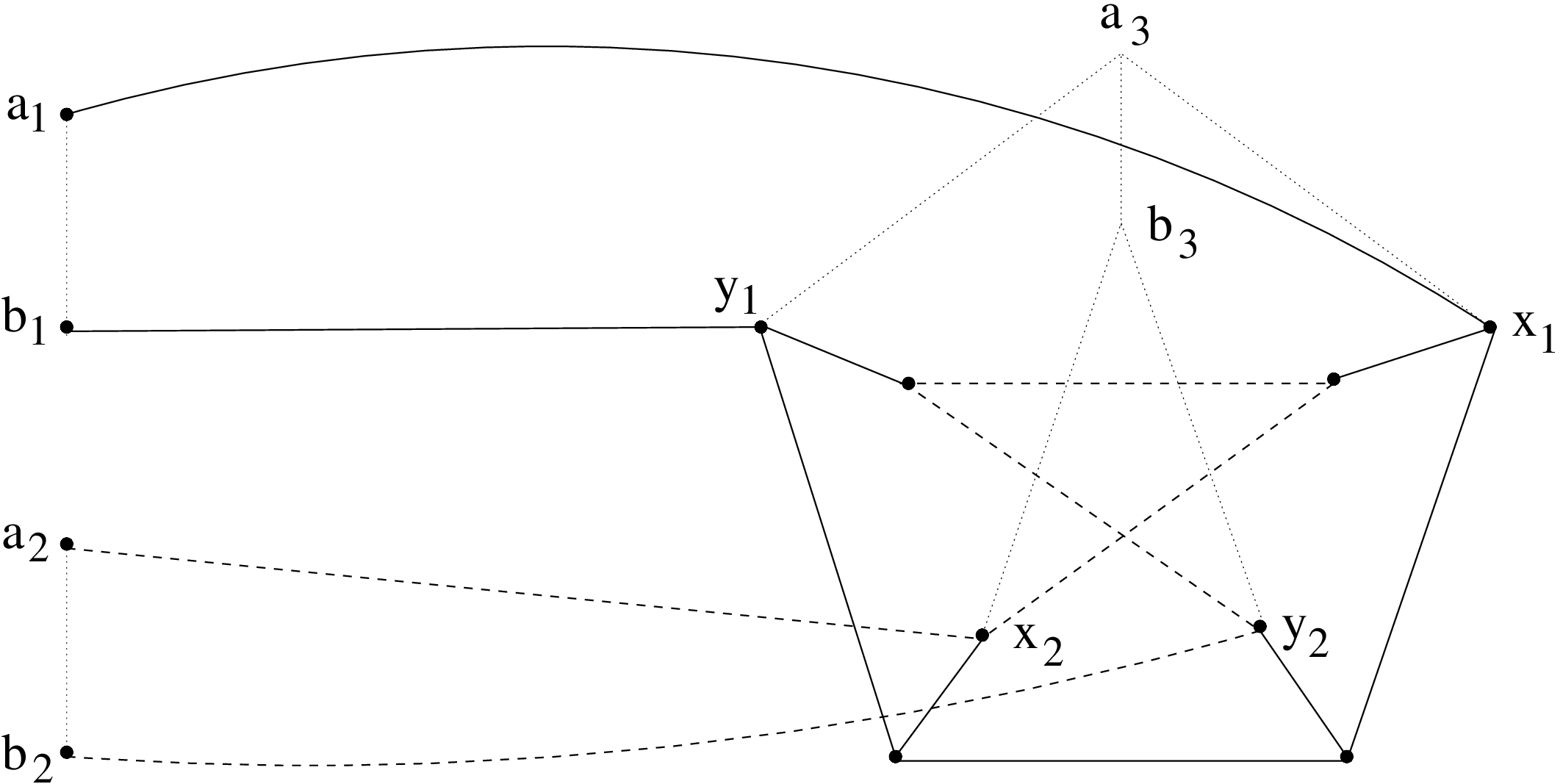,width=3.9in}
\caption{Constructing the graph $G'$ in the proof of Lemma \ref{l:reduction} (i).}
\label{f:4}
\end{figure}

{\bf (ii)}   
Let $e_1,e_2 \in E(T_G)$ with $e_1=a_1b_1$ and $e_2=a_2b_2$. Since we could exchange the vertex labels $a_2$ and $b_2$, we can assume that $b_1$, $b_2$ are in the same component of $T_G-e_2$. 
Define the snark $G':=G(\underline{e_1},e_2) \bullet P_{10}(e_3)$ with $e_3 \in E(P_{10})$, see Fig.\ref{f:5}. Let $C_5$ and $\hat C_5$ be two disjoint $5$-cycles of $P_{10}$ with $e_3 \in E(C_5)$ and let $x_1,x_2 \in V(C_5)$, see Fig.\ref{f:5}. Then the subgraph $T'$ of $G'$ with $E(T'):=(E(T_G)-\{e_1,e_2\}) \cup \{a_1h_1,h_1j_1,j_1b_1,h_1x_1,j_1y_1,a_2x_2,b_2y_2\} \cup 
E(P_{10})-(E(\hat C_5) \cup \{a_3x_1,a_3y_1,b_3x_2,b_3y_2,e_3\})$ is a hist of $G'$, see Fig.\ref{f:5}. 
Since the set of outer cycles of $G'$ consists of $\hat C_5$ and all outer cycles of $G$, the statement follows. \\

\begin{figure}[htpb] 
\centering\epsfig{file=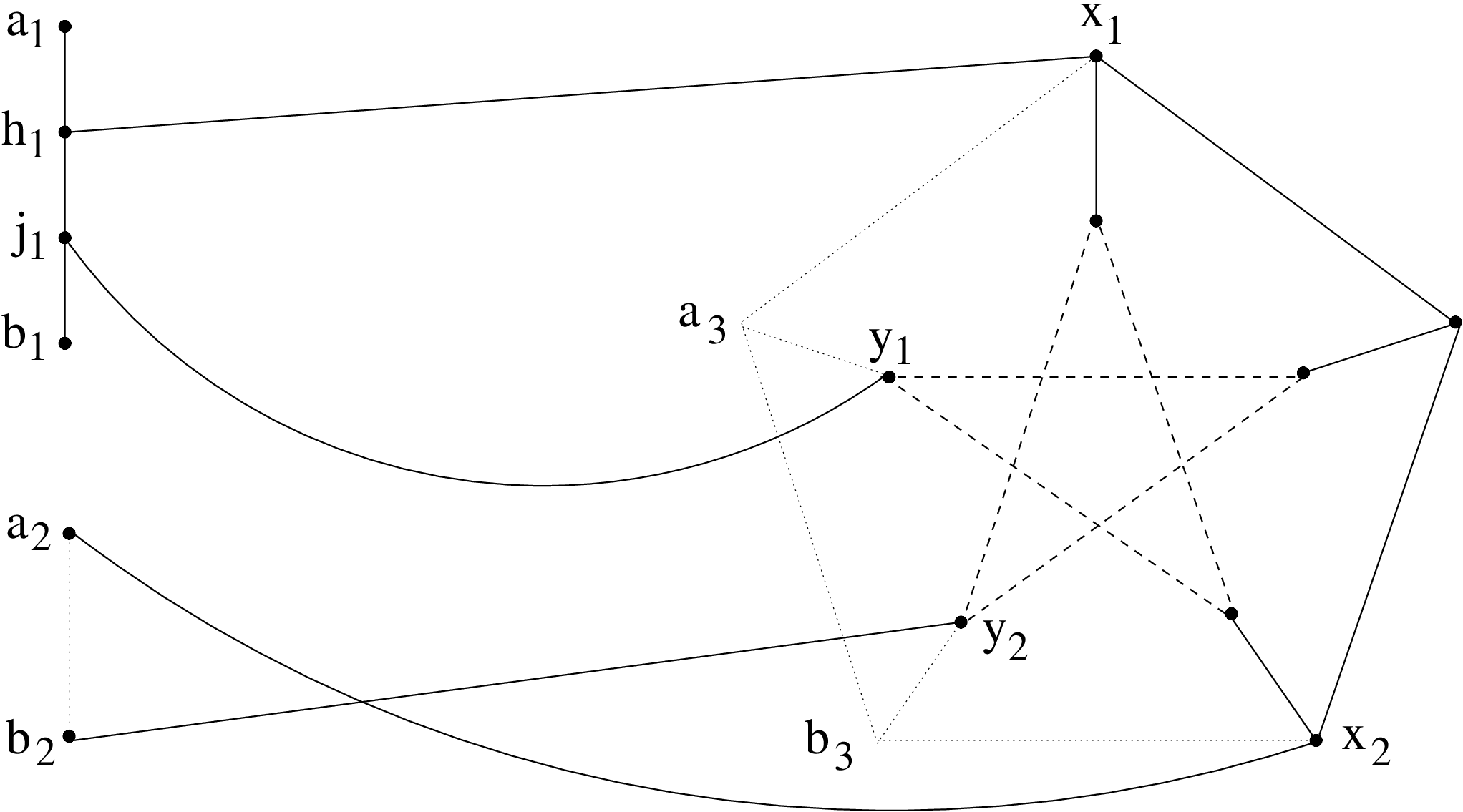,width=3.8in}
\caption{Constructing the graph $G'$ in the proof of Lemma \ref{l:reduction} (ii).}
\label{f:5}
\end{figure}

{\bf (iii)}
$P_{10}$ is a hist-snark with one outer cycle, see Fig.\ref{f:petersen}. Since this cycle has length $6$, statement (iii) follows from applying Theorem \ref{t:union}(1) by setting $H:=P_{10}$. 

{\bf (iv)} Let $e_1=a_1b_1$ be an edge of an outer cycle of length $k$ in $G$ and let $e_2 \in E(T_G)$. 
Let $B_{18}$ denote the Blanusa snark, see Fig.\ref{f:blan10}. Define the snark $G':=G(\underline{e_1},e_2) \bullet B_{18}(e_3)$ as illustrated in 
Fig.\ref{f:blan}. 
Note that two outer cycles of lengths $7$ and $k+2$ are presented in Fig.\ref{f:blan} by dashed lines. Let $B$ denote the edge set 
which contains all edges of $G'$ which are shown in bold face in Fig.\ref{f:blan}. It is straightforward to verify that 
the subgraph $T'$ of $G'$ with $E(T'):= (E(T_G)-\{e_2\}) \cup B$ is a hist in $G'$ satisfying $oc(G',T')=(oc(G,T_G)-\{k\}) \cup \{k+2\} \cup \{7\}$. \qed
\end{proof}

\begin{figure}[htpb] 
\centering\epsfig{file=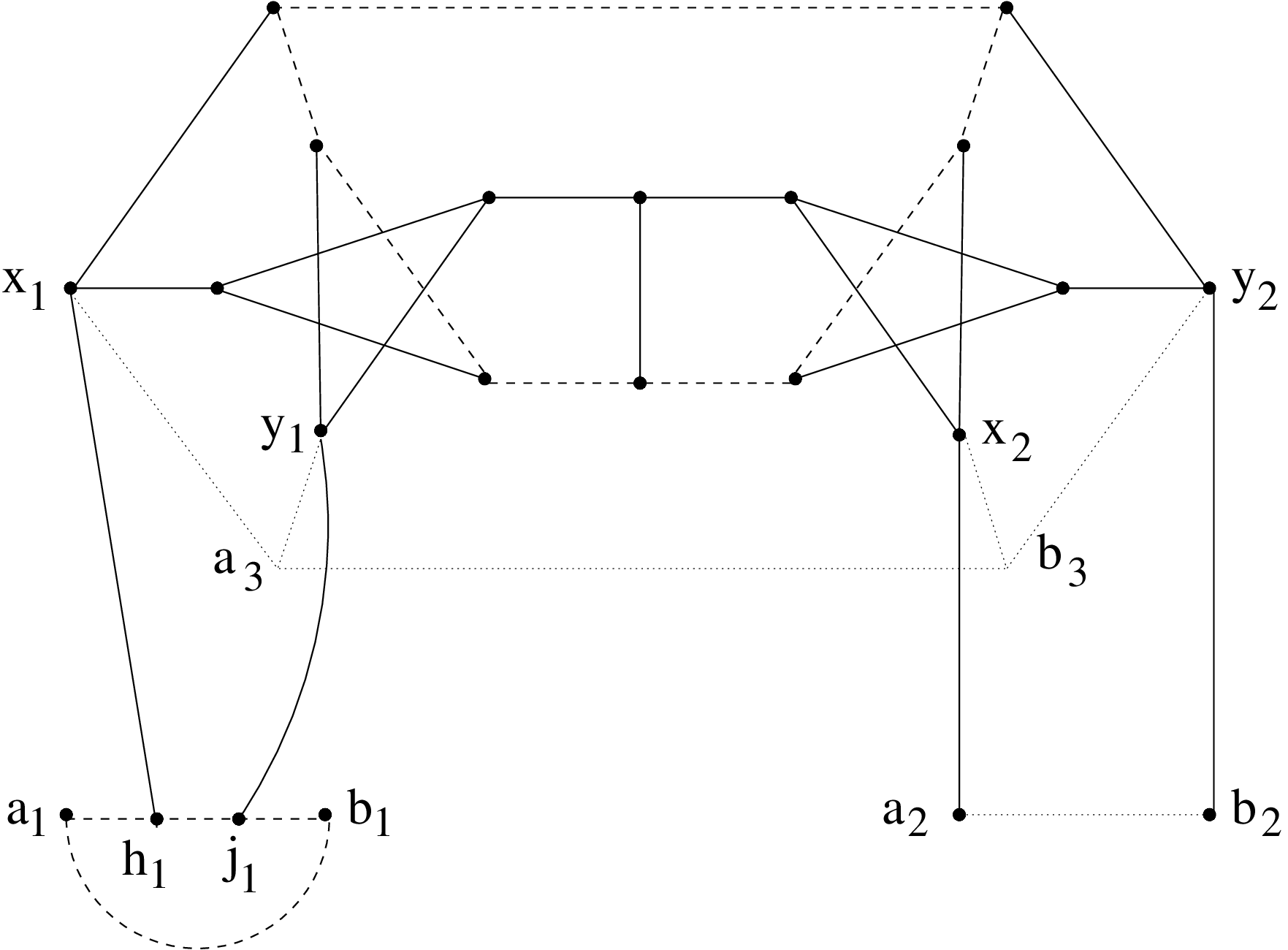,width=3.8in}
\caption{Constructing the graph $G'$ in the proof of Lemma \ref{l:reduction} (iv).}
\label{f:blan}
\end{figure}

\begin{definition}\label{d:S}
Let $S$ be a multiset of natural numbers, then 
$S^*$ denotes the set of all hist-snarks $G$ which have a hist $T_G$ such that $oc(G,T_G)=S$.
\end{definition}

For instance, the Blanusa snark $B_{18}$ satisfies $B_{18} \in \{10\}^*$ (see Fig.\ref{f:blan10}) 
but also satisfies $B_{18} \in \{5,5\}^*$. We leave it to the reader to verify the latter fact. 
The Petersen graph $P_{10}$ satisfies $P_{10} \in S^*$ if and only if $S=\{6\}$, 
see Fig.\ref{f:petersen}.


\begin{cor}\label{c:one}
There is a snark $G$ having a hist $T$ with $oc(G,T)=\{k\}$ if and only if $k=6$ or $k \geq 10$.
\end{cor}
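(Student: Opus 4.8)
The plan is to prove both directions of the biconditional.

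\medbreak\noindent\textbf{The direction $k=6$ or $k\geq 10$ implies existence.}
First I would dispose of the small cases by exhibiting explicit Hist-snarks. For $k=6$ the Petersen graph $P_{10}$ works, as already noted in the text. For $k=10$ the Blanusa snark $B_{18}$ works since $B_{18}\in\{10\}^*$. The remaining task is to realize every $k\geq 11$. The natural tool is Lemma \ref{l:reduction}(i), which takes a Hist-snark with an outer cycle of length $k$ to one with an outer cycle of length $k+4$ (keeping all other outer cycles). So starting from a Hist-snark in $\{6\}^*$ and iterating (i), I reach $\{6\},\{10\},\{14\},\dots$, i.e. all $k\equiv 2\pmod 4$ with $k\geq 6$; starting from $\{10\}^*$ I also get $k\equiv 2\pmod 4$, $k\geq 10$, which is no help. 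To cover the other residues I need seed snarks in $\{k\}^*$ for $k\in\{11,12,13\}$ (equivalently $k\equiv 3,0,1\pmod 4$), and then iterate (i) on each. I expect these three seeds to come from the computer-generated snarks alluded to in Section 2 (the ``handful of computer generated snarks''), or possibly from one further application of the other constructions (Lemma \ref{l:reduction}, Theorem \ref{t:union}, or the $\blacktriangle$-product) to Petersen/Blanusa. Concretely: having a single Hist-snark in each of $\{11\}^*$, $\{12\}^*$, $\{13\}^*$, together with $P_{10}\in\{6\}^*$, and closing under ``$+4$'', yields $\{k\}^*\neq\emptyset$ for every $k=6$ and every $k\geq 10$.

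\medbreak\noindent\textbf{The direction: existence implies $k=6$ or $k\geq 10$.}
Here I must rule out $k\in\{1,2,3,4,5,7,8,9\}$. Suppose $G$ is a snark with Hist $T$ and a single outer cycle $C$ of length $k$. The edges of $G$ decompose into $E(T)\cup E(C)$, and $T$ has only vertices of degree $1$ and $3$. If $T$ has $t$ internal (degree-$3$) vertices and $\ell$ leaves, then $\ell=t+2$ (a property of such trees) and every leaf is a vertex of $C$, so $\ell=k$; hence $G$ has $n=2k-2$ vertices. Since $G$ is a snark it has girth $5$, so $k\geq 5$ is forced immediately (an outer cycle is a cycle of $G$), killing $k\in\{1,2,3,4\}$. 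For $k=5$: then $n=8<10$, but every snark has at least $10$ vertices (Petersen is the smallest), contradiction. For $k=7,8,9$: then $n=12,14,16$, and it is known that there is no snark on $12$, $14$, or $16$ vertices — the only snarks with at most $16$ vertices are $P_{10}$ ($n=10$) and the two Blanusa snarks ($n=18$), so $n\in\{12,14,16\}$ is impossible. I would cite the standard enumeration of small snarks (e.g. \cite{BCGM}) for this. This finishes the forward direction.

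\medbreak\noindent\textbf{Main obstacle.}
The routine parts are the counting identity $n=2k-2$ and the appeal to the nonexistence of small snarks. The real content — and the step I expect to be the crux — is producing the three seed Hist-snarks in $\{11\}^*$, $\{12\}^*$, $\{13\}^*$ (the residues $3,0,1\bmod 4$ that $P_{10}$ and $B_{18}$ together with Lemma \ref{l:reduction}(i) cannot reach). One cannot get these by the $+4$ operation alone, so one needs either explicit (likely computer-found) examples or a separate small construction; verifying that each candidate is genuinely a snark (cyclically $4$-edge-connected, girth $5$, not $3$-edge-colorable) and that its named spanning tree is a Hist with exactly one outer cycle of the claimed length is where the work lies.
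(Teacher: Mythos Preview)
Your approach is correct and matches the paper's proof essentially line for line, including the counting identity $n=2k-2$ and the appeal to the nonexistence of snarks on $12$, $14$, $16$ vertices. For the three seeds you correctly flag as the crux, the paper obtains $\{11\}^*\neq\emptyset$ by applying Theorem~\ref{t:union}(2) to two copies of $P_{10}$ (since $6+6-1=11$), takes $\{12\}^*\neq\emptyset$ from the second Loupekine snark $L_{22}$, and supplies a computer-found example $T(13)\in\{13\}^*$ in the appendix.
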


\begin{proof}
As mentioned above $B_{18} \in \{10\}^*$, see Fig.\ref{f:blan10}.
By Theorem \ref{t:union} (2) and since $P_{10} \in \{6\}^*$, it follows that $\{11\}^* \not=\emptyset$. 
The second Loupekine snark denoted by $L_{22}$ satisfies $L_{22} \in \{12\}^*$, see Fig.\ref{f:loup}. The hist-snark $T(13)$, see Appendix A1 satisfies $T(13) \in \{13\}^*$. 
Applying Lemma \ref{l:reduction} (i) to each of these four hist-snarks and proceeding inductively, we
obtain for every natural number $k \geq 10$ a hist-snark $G$ satisfying $G \in \{k\}^*$. 
Since $k=|V(G)|/2+1$ (see Theorem 2 in \cite{HO}) and since there is no snark with $10<k<18$ vertices, $\{l\}^* =\emptyset$ for every $l \in \{1,2,3,4,5,7,8,9\}$.
Finally, since $P_{10} \in \{6\}^*$ the proof is finished. \qed
\end{proof}

\begin{figure}[htpb] 
\centering\epsfig{file=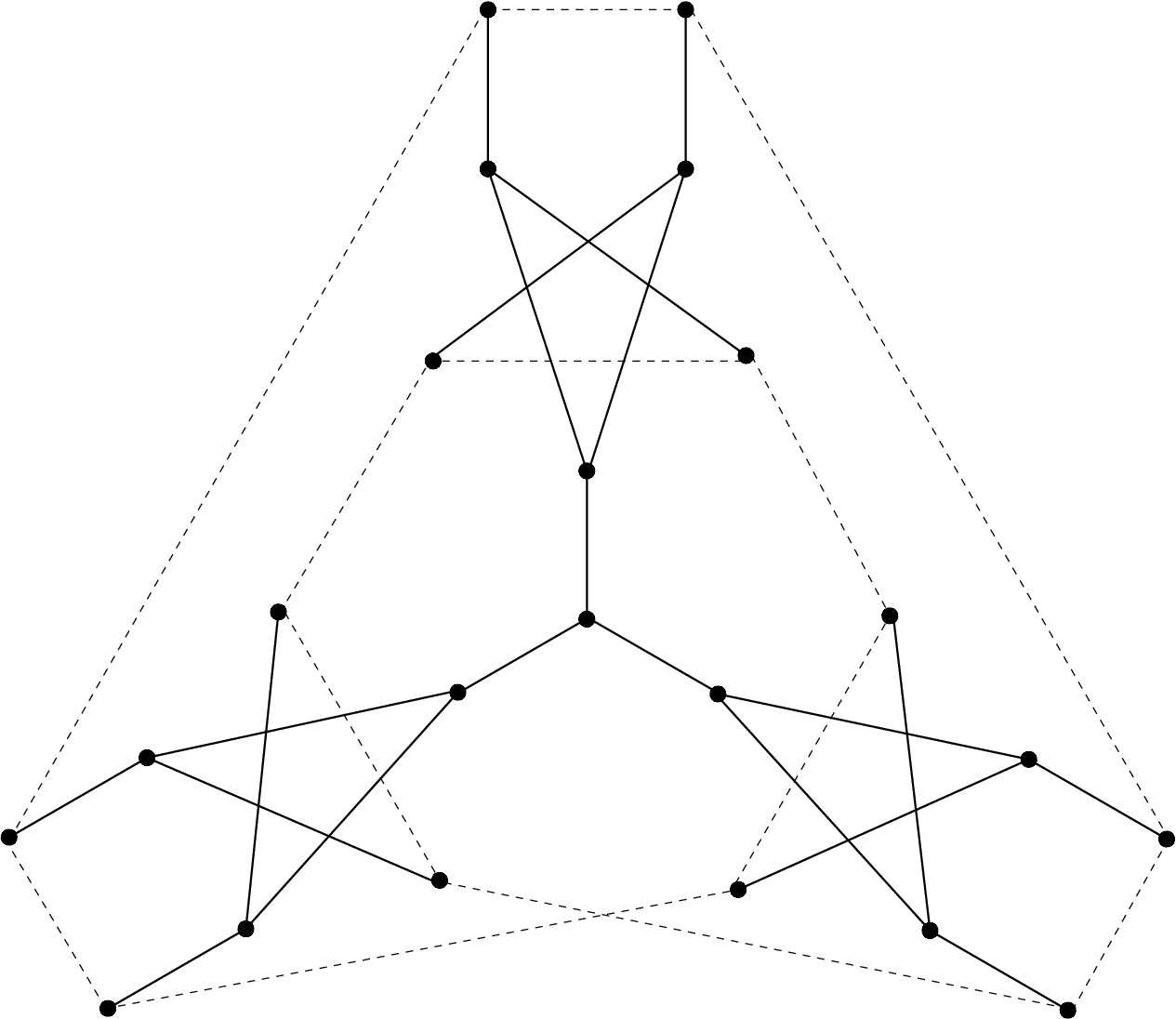,width=3in}
\caption{The second Loupekine snark with an outer cycle of length $12$.}
\label{f:loup}
\end{figure}


\begin{lemma}\label{l:two}
Let $S=\{x,y\}$ with $x,y \in \{5,6,7,8\}$, then $S^* \not= \emptyset$.
\end{lemma}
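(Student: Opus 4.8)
The plan is to realize each of the ten multisets $\{x,y\}$ with $x,y\in\{5,6,7,8\}$ by an explicit Hist-snark, obtaining four of them from the constructions of Section~2 and the other six from a short computer search. Four cases are immediate: $\{5,5\}$ is witnessed by the Blanusa snark, since $B_{18}\in\{5,5\}^*$; applying Lemma~\ref{l:reduction}(ii) to the Petersen graph $P_{10}\in\{6\}^*$ adjoins an outer $5$-cycle and gives a Hist-snark in $\{5,6\}^*$; applying Lemma~\ref{l:reduction}(iii) to $P_{10}$ (equivalently, Theorem~\ref{t:union}(1) with two copies of $P_{10}$) adjoins an outer $6$-cycle and gives a Hist-snark in $\{6,6\}^*$; and applying Lemma~\ref{l:reduction}(iv) to $P_{10}$ turns its unique outer $6$-cycle into an outer $8$-cycle while creating a new outer $7$-cycle, hence gives a Hist-snark in $\{7,8\}^*$.

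The remaining pairs $\{5,7\},\{5,8\},\{6,7\},\{6,8\},\{7,7\},\{8,8\}$ cannot be produced this way. Indeed, any construction of Section~2 that leaves the number of outer cycles equal to two either appends an outer cycle of length at least $5$ (the operations that pass from one outer cycle to two) or enlarges an existing outer cycle by at least $4$ (Lemma~\ref{l:reduction}(i), and Theorem~\ref{t:union}(2) combined with a single-cycle Hist-snark, whose outer cycle has length $6$ or at least $10$ by Corollary~\ref{c:one}); starting from $B_{18}\in\{5,5\}^*$ and from $P_{10}$ this only ever yields the two-element multisets $\{5,5\},\{5,6\},\{6,6\},\{7,8\}$ inside $\{5,6,7,8\}$, and the same reasoning shows that none of the six remaining multisets can be reached from any of the others. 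So for each of the six remaining multisets I would instead exhibit a concrete Hist-snark found by computer. Since a Hist-snark realizing $\{x,y\}$ has exactly $2(x+y-1)$ vertices (cf.\ the proof of Corollary~\ref{c:one}), these witnesses all have at most $30$ vertices, so the search is finite and small; each example would be displayed in the Appendix together with a check that it has girth $5$, is cyclically $4$-edge connected, is not $3$-edge colorable, and carries a Hist whose two outer cycles have exactly the prescribed lengths.

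The hard part is this last step. The machinery of Section~2 is monotone --- it only inflates existing outer cycles or appends new ones --- so the six small, ``irreducible'' multisets have to come from a search rather than from a derivation. For each of the six examples the actual work is certifying that the graph really is a snark (cyclically $4$-edge connected of girth $5$ and admitting no $3$-edge coloring) and that the exhibited spanning tree is a Hist inducing precisely the claimed pair of outer-cycle lengths; once the graphs are in hand these are routine verifications, of the same kind already carried out for $P_{10}$, $B_{18}$ and $L_{22}$ in the paper.
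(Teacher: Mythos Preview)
Your proposal is correct and follows essentially the same strategy as the paper: obtain $\{5,6\}$, $\{6,6\}$, $\{7,8\}$ from Lemma~\ref{l:reduction}(ii)--(iv) applied to $P_{10}$, and supply the remaining pairs by explicit computer-found Hist-snarks listed in the Appendix. The only minor difference is that you also dispose of $\{5,5\}$ via $B_{18}\in\{5,5\}^*$ (a fact the paper states but does not invoke here), leaving six computer cases instead of seven; your middle paragraph arguing that those six are unreachable by the Section~2 constructions is extra motivation that the paper simply omits.
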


\begin{proof}
By applying Lemma \ref{l:reduction} (ii),(iii),(iv) by setting $G:=P_{10}$, we obtain that 
$\{5,6\}^*\not= \emptyset$, $\{6,6\}^*\not= \emptyset$, $\{7,8\}^*\not= \emptyset$. To avoid more constructions, we used a computer. We refer to
Appendix A1 and Fig.\ref{f:liste}, where one member of $\{x,y\}^*$ denoted by $T(x,y)$ 
is presented for the remaining pairs.
\end{proof}

\begin{lemma}\label{l:three}
Let $S=\{x,y,z\}$ with $x,y,z \in \{5,6,7,8\}$, then $S^* \not= \emptyset$.
\end{lemma}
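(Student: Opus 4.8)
The plan is to build every triple out of the pairs furnished by Lemma~\ref{l:two}, using the operations of Lemma~\ref{l:reduction} (and, if convenient, Theorem~\ref{t:union}), and to settle the one recalcitrant triple $\{8,8,8\}$ by a direct example. The guiding observation is that an outer cycle of a snark has length at least $5$ (snarks have girth $5$), so no operation can shorten a cycle; a short bookkeeping argument then isolates $\{8,8,8\}$ as the only triple that cannot be assembled from smaller pieces.

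First I would dispose of the $16$ triples $S$ that contain a $5$ or a $6$. If $5\in S$, write $S=\{5\}\cup S'$ with $S'$ a two-element submultiset of $\{5,6,7,8\}$; then $(S')^{*}\neq\emptyset$ by Lemma~\ref{l:two}, and Lemma~\ref{l:reduction}(ii) applied to any member of $(S')^{*}$ gives a member of $S^{*}$. If $5\notin S$ but $6\in S$, argue the same way using Lemma~\ref{l:reduction}(iii). This leaves only $\{7,7,7\}$, $\{7,7,8\}$, $\{7,8,8\}$ and $\{8,8,8\}$. For the first three I would invoke Lemma~\ref{l:reduction}(iv), which replaces some $k$ by $k+2$ and adjoins a $7$: starting from a member of $\{5,7\}^{*}$ and taking $k=5$ yields $oc=\{7,7,7\}$; starting from $\{5,8\}^{*}$ with $k=5$ (or from $\{6,7\}^{*}$ with $k=6$) yields $\{7,7,8\}$; starting from $\{6,8\}^{*}$ with $k=6$ yields $\{7,8,8\}$. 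All the pairs used here are nonempty by Lemma~\ref{l:two}.

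The remaining triple $\{8,8,8\}$ is the crux, and I do not expect it to be reachable by the operations above. Indeed, Lemma~\ref{l:reduction}(i) only raises a cycle length (by $4$); Lemma~\ref{l:reduction}(iv) introduces a $7$ that can afterwards only become a cycle of length $\geq 9$; Lemma~\ref{l:reduction}(ii),(iii) introduce a new $5$ or $6$; a merge via Theorem~\ref{t:union}(2) always produces a cycle of length $k+l-1\geq 9$; and Theorem~\ref{t:union}(1) cannot supply the third $8$ because $\{8\}^{*}=\emptyset$ by Corollary~\ref{c:one}. So $\{8,8,8\}$ must be exhibited explicitly. Since a Hist-snark $G$ with $oc(G,T)=\{8,8,8\}$ has $8+8+8=24$ leaves, the Hist has $22$ vertices of degree $3$ and $G$ has $46$ vertices; I would present the computer-found $46$-vertex Hist-snark $T(8,8,8)$ in Appendix~A1 and Fig.~\ref{f:liste} and check that the marked spanning tree is a Hist whose three outer cycles each have length $8$.

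The main obstacle is precisely this last case: $\{8,8,8\}$ is the unique triple over $\{5,6,7,8\}$ that the constructions assembled so far cannot produce, so the lemma is finished only by adjoining one explicit (computer-generated) snark; all the other cases are a routine check resting on Lemma~\ref{l:two}.
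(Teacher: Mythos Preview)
Your argument is correct and matches the paper's proof almost exactly: dispose of the triples containing a $5$ or $6$ via Lemma~\ref{l:reduction}(ii),(iii) together with Lemma~\ref{l:two}, obtain the triples containing a $7$ from Lemma~\ref{l:reduction}(iv) applied to a suitable pair from Lemma~\ref{l:two} (the paper writes this as $\{y-2,z\}\mapsto\{7,y,z\}$, which is exactly your three instances), and settle $\{8,8,8\}$ by the explicit example $T(8,8,8)$. The only slip is a cross-reference: $T(8,8,8)$ is in Appendix~A2 and Fig.~\ref{f:t888}, not Appendix~A1/Fig.~\ref{f:liste}.
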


\begin{proof}
By Lemma \ref{l:reduction} (ii), Lemma \ref{l:reduction} (iii) and Lemma \ref{l:two}, $S^*\not= \emptyset$ if $\{5,6\} \cap S \not= \emptyset$.
Hence we assume that $x,y,z \in \{7,8\}$. 
Suppose $7 \in S$ and let without loss of generality $x=7$. 
By Lemma \ref{l:two}, $\{y-2,z\}^* \not= \emptyset$. Applying Lemma \ref{l:reduction} (iv), we obtain that $\{7,y,z\}^* \not= \emptyset$. 
Since there is a snark $T(8,8,8)$ (see Fig.\ref{f:t888} in Appendix A2) which is a member of $\{8,8,8\}^*$, the lemma follows. \qed
\end{proof}

Note that the illustrated snark in $\{8,8,8\}^*$ has a $2\pi/3$ rotation symmetry and a hist which has equal distance from its central root to every leaf. Such snarks 
are called \textit{rotation snarks}, for an exact definition see \cite{HJ}. The Petersen graph and both Loupekine's snarks (the smallest cyclically $5$-edge connected snarks apart from the 
Petersen graph) are rotation snarks. All rotation snarks with at most $46$ vertices are presented in \cite{HJ}.

\begin{figure}[htpb] 
\centering\epsfig{file=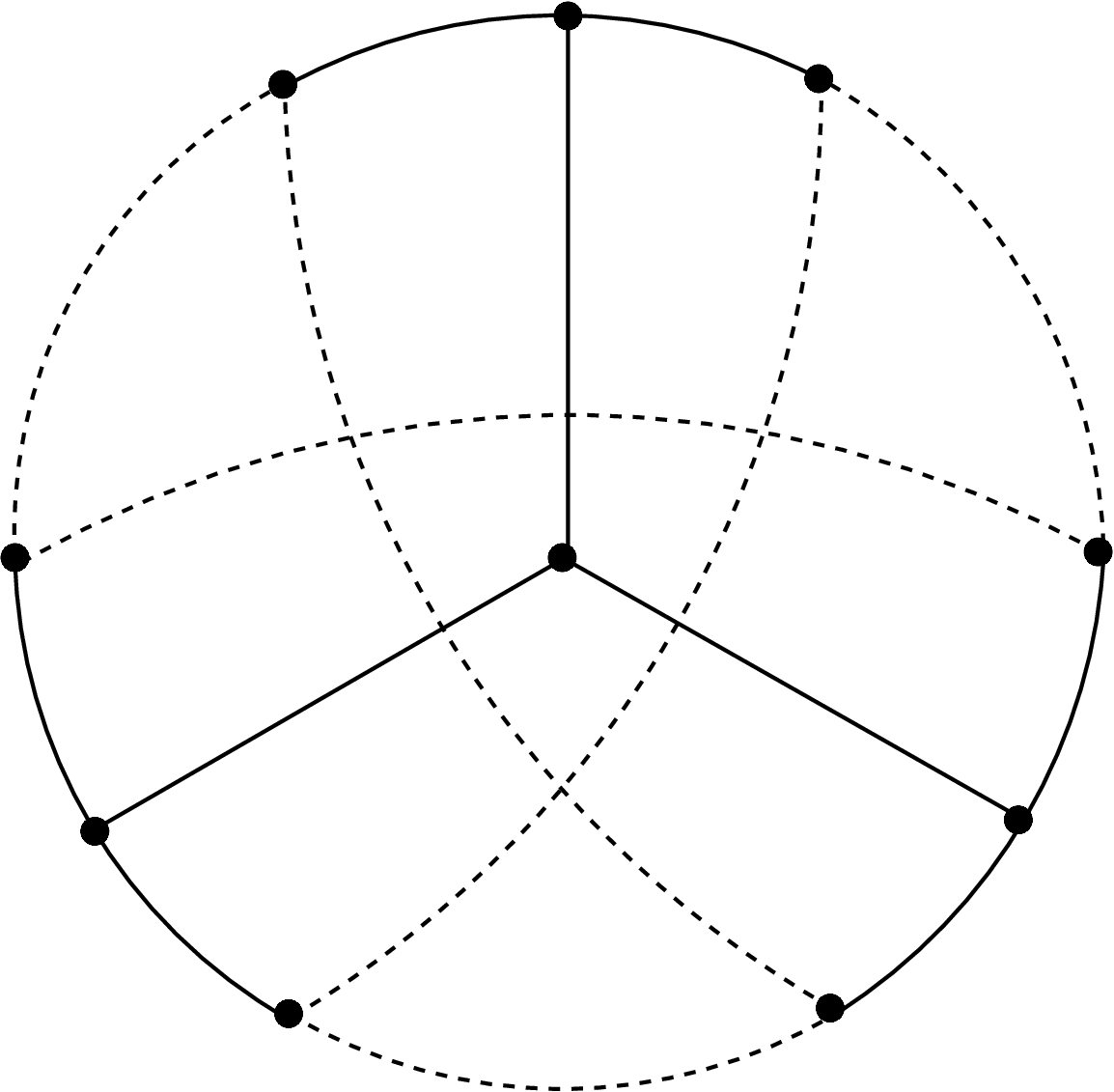,width=1.8in}
\caption{The Petersen graph with an outer cycle of length $6$.}
\label{f:petersen}
\end{figure}




\medbreak\noindent\textit{Proof of Theorem \ref{t:main}.}\quad
Statement (i) is implied by Corollary \ref{c:one}. 
Statement (ii) is obviously a necessary condition, otherwise $G$ has girth less than five. 
Hence it suffices to show that there is a hist-snark in $S^*$ if $S$ satisfies (ii).  \\
Suppose $S$ is a counterexample which is firstly minimal with respect to $|S|$ and secondly minimal with respect to the largest number in $S$. Suppose $|S| \geq 4$. Then there is a partition $S=S_2 \cup S_3$ with $|S_2|=2$ and $|S_3|=|S|-2 \geq 2$. Since the elements of $S_2$, $S_3$ satisfy (ii), there is by minimality a hist-snark $H_i \in S^*_i, i=2,3$. By Theorem \ref{t:union}, there is a hist-snark in $(S_2 \cup S_3)^*=S^*$ which is a contradiction. Hence $|S| \in \{2,3\}$. \\Suppose $m \in S$ and $m > 8$. Set $S_1=(S-\{m\}) \cup \{m-4\}$. Obviously the elements of $S_1$ 
fulfill (ii) and thus there is by minimality a hist-snark $H_1 \in S^*_1$. Applying Lemma 
\ref{l:reduction} (i) to $S_1$, we obtain a hist-snark in $S^*$ which is a contradiction.
Thus, $S$ consists of two or three elements and each of them is contained in $\{5,6,7,8\}$. By Lemma \ref{l:two} and 
Lemma \ref{l:three}, this is not possible. \qed

\begin{figure}[htpb] 
\centering\epsfig{file=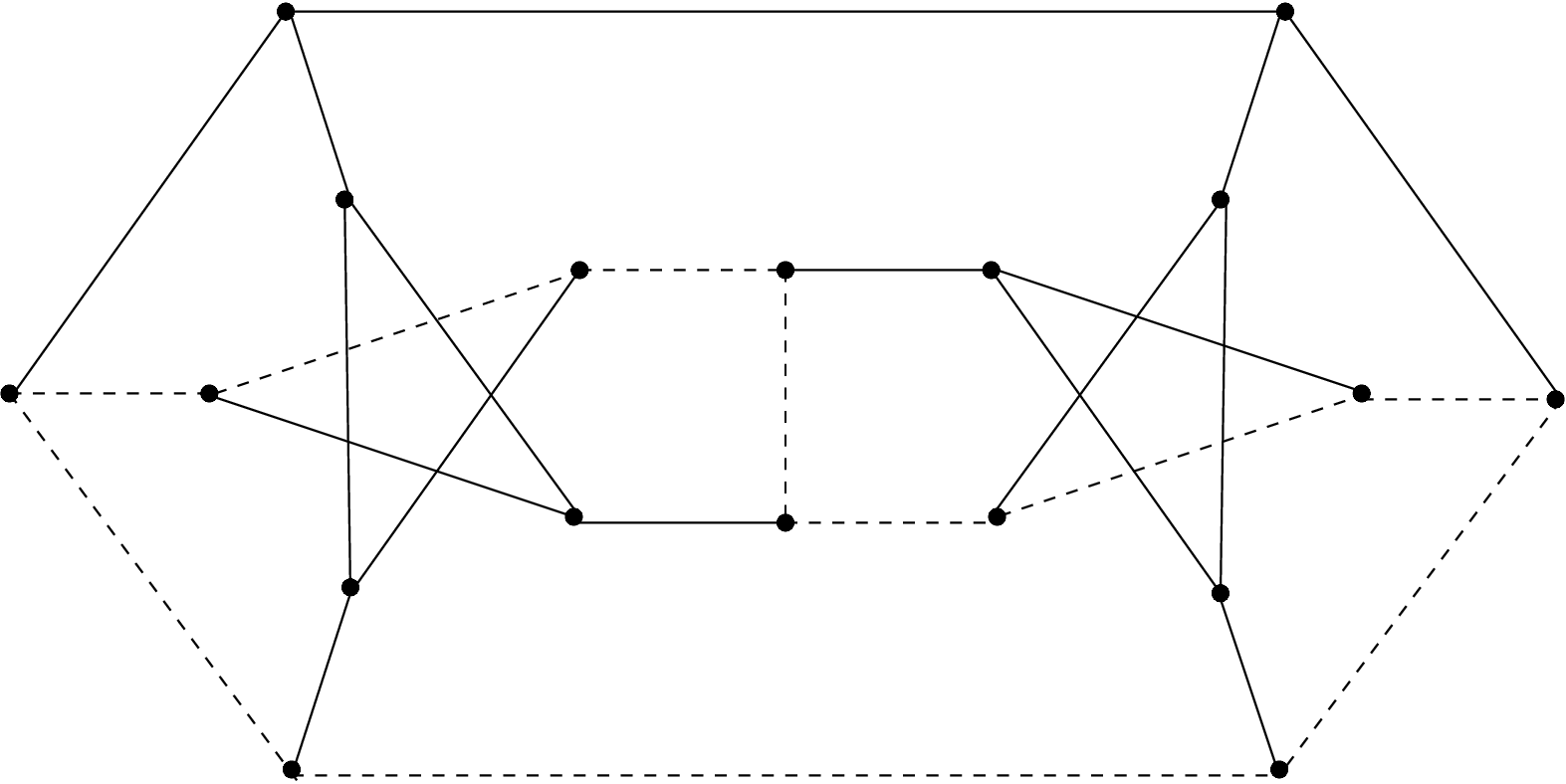,width=3.1in}
\caption{The Blanusa snark $B_{18}$ with an outer cycle of length $10$.}
\label{f:blan10}
\end{figure}

\section{Open problems}

The following conjecture by the first author, was presented firstly at the 9th Workshop on the Matthews-Sumner Conjecture and Related Problems in Pilsen in 2017. 

\begin{con}\label{c:cover}
Every hist-snark has a cycle double cover which contains all outer cycles.
\end{con}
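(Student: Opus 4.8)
The plan is to first recast Conjecture~\ref{c:cover} as a concrete decomposition problem on a fixed multigraph, and then to attack that problem by exploiting the tree structure of the Hist.

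Let $G$ be a Hist-snark with Hist $T$ and outer cycles $C_1,\dots,C_k$; write $\mathcal O=\{C_1,\dots,C_k\}$. Recall from Definition~\ref{d:} that $E(G)$ is the disjoint union of $E(T)$ and $E(C_1)\cup\cdots\cup E(C_k)$, that every leaf of $T$ lies on exactly one outer cycle and is incident with exactly one tree edge, and that every internal vertex is incident with three tree edges. Let $R$ be the multigraph obtained from $G$ by doubling every edge of $T$ and keeping every outer edge with multiplicity one. The first step is the equivalence: $G$ has a cycle double cover containing $\mathcal O$ if and only if $R$ admits a decomposition into circuits (a circuit being, as in the paper, a $2$-regular connected subgraph) none of which is a digon, i.e.\ a $2$-cycle formed by the two copies of a single tree edge. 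Indeed, given such a cover, the members other than $C_1,\dots,C_k$ cover each outer edge once and each tree edge twice and hence form a digon-free circuit decomposition of $R$; conversely such a decomposition together with $\mathcal O$ covers every edge of $G$ exactly twice by genuine cycles. Now every vertex of $R$ has even degree ($4$ at a leaf, $6$ at an internal vertex) and $R$ is connected since the spanning tree $T$ lies in $R$; hence $R$, having all degrees even, decomposes into circuits, and \emph{the entire content of the conjecture is to realise such a decomposition with no digon} -- equivalently, so that the two copies of each tree edge are placed in two different circuits.

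The main line I would pursue is a local-to-global argument organised along $T$. At each vertex the two copies of every incident tree edge can always be separated by a suitable transition (a pairing of the incident edges): at a leaf one pairs each copy of the pendant tree edge with one of the two outer edges, and at an internal vertex one uses the cyclic pattern on the three doubled edges that keeps all three parallel pairs apart. Any transition system that never pairs two parallel copies produces a decomposition with no digon, so digon-avoidance is never obstructed locally. I would then root $T$ at an internal vertex and fix the transitions by treating the leaves first and moving inward, maintaining as an invariant that the partially built circuits never close up into a digon and that at each newly treated vertex enough outer-edge transitions remain free to absorb the copies of the pendant tree edge into distinct circuits. An equivalent and possibly more convenient viewpoint is topological: a cycle double cover of a cubic graph is exactly a circular ($2$-cell) embedding whose faces are the members of the cover, so the conjecture asks for a closed-surface embedding of $G$ in which every outer cycle bounds a face; such an embedding can be sought by thickening $T$ to a disc and attaching one band per outer edge, the cyclic orders and twists of the bands being precisely the transition freedom above.

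For a complementary, construction-based route one can try to prove the property inductively over the operations used in this paper. Concretely, I would attempt to show that the dot product, the modified products of Definitions~\ref{d:var} and~\ref{d:triangel}, and the operations behind Theorem~\ref{t:union} and Lemma~\ref{l:reduction} each lift a cycle double cover containing the outer cycles of the building blocks to one of the resulting Hist-snark, with the Petersen graph, the Blanu\v sa snark and the computer-listed base snarks verified directly and Theorem~\ref{s} covering the remaining small orders. I expect the genuine obstacle to be global rather than local: the freedom to avoid digons at a single vertex is clear, but reconciling these choices across the whole tree without ever forcing two copies of a tree edge to close into a digon -- while keeping every member a simple cycle -- requires a global parity or potential argument, and this is exactly what is missing. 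The construction-based induction faces a second, structural obstacle, namely that not every Hist-snark arises from these operations, so on its own it would establish the conjecture only for the constructible subclass. On the positive side, the reformulation shows this is \emph{not} the full cycle double cover conjecture but a finite, per-graph decomposition question with tightly constrained obstructions -- only doubled tree edges can create digons, and since no vertex of $R$ has degree two no digon is ever forced -- which is why I would expect a uniform proof to be within reach.
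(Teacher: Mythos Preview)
The statement you are addressing is Conjecture~\ref{c:cover}, which the paper lists in its ``Open problems'' section and does \emph{not} prove. There is therefore no proof in the paper to compare your attempt against; the authors present it as an open question motivating their study of Hist-snarks, nothing more.

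Your write-up is not a proof either, and to your credit you say so explicitly: you identify the missing global step (``this is exactly what is missing'') and the structural limitation of the construction-based route (``not every Hist-snark arises from these operations''). What you have produced is a research outline. The reformulation as a digon-free circuit decomposition of the multigraph $R$ is correct and useful; in particular your implicit claim that a non-digon circuit of $R$ necessarily projects to a simple cycle of $G$ is right, since a $2$-regular connected subgraph of $R$ containing both copies of some tree edge must in fact be that digon. The equivalence with a closed-surface embedding in which every outer cycle bounds a face is also correct and is a standard reformulation of cycle-double-cover statements.

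Where your plan stalls is exactly where one would expect: the local transition freedom you describe is genuine, but a root-to-leaf or leaf-to-root sweep along $T$ does not by itself control how the partial trails recombine into circuits globally, and there is no evident invariant that prevents two copies of a tree edge from eventually landing in the same circuit. The induction over the paper's product constructions is similarly incomplete for the reason you state. In short, you have correctly located the difficulty but not resolved it; since the paper offers no proof, there is nothing further to compare.
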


The above conjecture is motivated by the following observation on hist-snarks.

\begin{ob}\label{c:cor}\cite{HO3}
Let $G$ be a snark with a hist $T$. Suppose there is a matching $M$ of $G$ satisfying $M \subseteq E(G)-E(T)$, and suppose the cubic graph homeomorphic to $G-M$ is $3$-edge colorable. Then $G$ has a cycle double cover containing all outer cycles of $G$.
\end{ob}

We omit here a proof of Observation \ref{c:cor} since Theorem 3.2 in \cite{HZZ} implies Observation \ref{c:cor}. 
Note that Conjecture \ref{c:cover} is already known to hold for all hist-snarks which have at most three outer cycles, see \cite{HZZ}. 
 

\section*{Acknowledgments}
A.Hoffmann-Ostenhof was supported by the Austrian Science Fund (FWF) project P 26686. The computational results presented have been achieved by using the Vienna Scientific Cluster (VSC).

\section{Appendix}

{\bf A1.} The following hist-snarks are defined by the corresponding hists illustrated below and the outer cycles whose vertices are 
presented within brackets in cyclic order. \\

\noindent
$T(5,5):= [10,15,14,17,16]\,\,[2,7,3,8,9]$\\
$T(5,7):= [3,15,13,17,16]\,\,[10,4,2,21,20,18,11]$\\
$T(5,8):= [3,15,13,17,16]\,\,[10,4,2,23,22,18,11,21]$\\
$T(6,7):= [1,6,7,19,18,22]\,\,[4,5,17,13,15,14,10]$\\
$T(6,8):= [18,19,14,21,20,23]\,\,[1,5,4,2,7,6,24,16]$\\
$T(7,7):= [17,13,15,14,10,25,24]\,\,[1,6,7,19,2,23,22]$\\
$T(8,8):= [12,9,8,29,28,4,5,13]\,\,[14,15,18,21,24,26,19,23]$\\
$T(13):= [2,19,7,3,15,13,17,5,20,21,11,9,23]$\\


\begin{figure}[htpb] 
\centering\epsfig{file=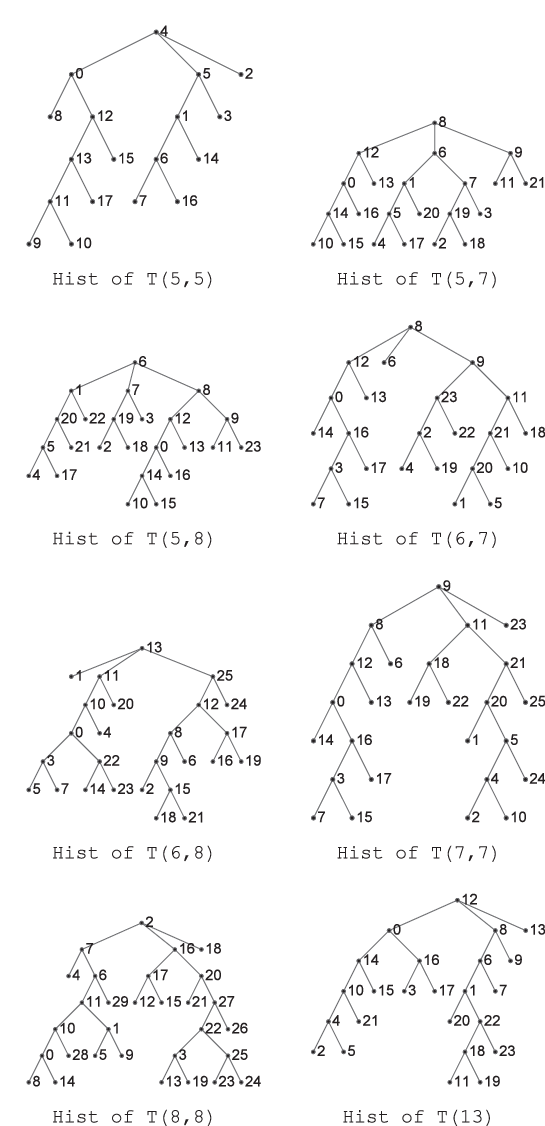,width=4.4in}
\caption{See Appendix A1, proof of Corollary \ref{c:one} and proof of Lemma \ref{l:two}.}
\label{f:liste}
\end{figure}


The adjacency lists of the above hist-snarks: \\

$T(5,5):0(4,8,12)1(5,6,14)2(4,7,9)3(5,7,8)4(5)6(7,16)8(9)9(11)10(11,15,16)11(13)\\12(13,15)13(17)14(15,17)16(17)$\\


$T(5,7):0(12,14,16)1(5,6,20)2(4,19,21)3(7,15,16)4(5,10)5(17)6(7,8)7(19)8(9,12)\\9(11,21)10(11,14)11(18)12(13)13(15,17)14(15)16(17)18(19,20)20(21)$\\

$T(5,8):0(12,14,16)1(6,20,22)2(4,19,23)3(7,15,16)4(5,10)5(17,20)6(7,8)7(19)\\8(9,12)9(11,23)10(14,21)11(18,21)12(13)13(15,17)14(15)16(17)18(19,22)20(21)22(23)$\\


$T(6,7):0(12,14,16)1(6,20,22)2(4,19,23)3(7,15,16)4(5,10)5(17,20)6(7,8)7(19)\\8(9,12)9(11,23)10(14,21)11(18,21)12(13)13(15,17)14(15)16(17)18(19,22)20(21)22(23)$\\

$T(6,8):0(3,10,22)1(5,13,16)2(4,7,9)3(5,7)4(5,10)6(7,8,24)8(9,12)9(15)10(11)\\   11(13,20)12(17,25)13(25)14(19,21,22)15(18,21)16(17,24)17(19)18(19,23)20(21,23)22(23)\\24(25)$\\

$T(7,7):0(12,14,16)1(6,20,22)2(4,19,23)3(7,15,16)4(5,10)5(20,24)6(7,8)7(19)\\8(9,12)9(11,23)10(14,25)11(18,21)12(13)13(15,17)14(15)16(17)17(24)18(19,22)20(21)\\21(25)22(23)24(25)$\\


$T(8,8):0(8,10,14)1(5,9,11)2(7,16,18)3(13,19,22)4(5,7,28)5(13)6(7,11,29)8(9,29)\\9(12)10(11,28)12(13,17)14(15,23)15(17,18)16(17,20)18(21)19(23,26)20(21,27)21(24)\\22(25,27)23(25)24(25,26)26(27)28(29)$\\

$T(13):0(12,14,16)1(6,20,22)2(4,19,23)3(7,15,16)4(5,10)5(17,20)6(7,8)7(19)8(9,12)\\9(11,23)10(14,21)11(18,21)12(13)13(15,17)14(15)16(17)18(19,22)20(21)22(23)$\\



{\bf A2.} 
$T(8,8,8):=[0,3,4,7,18,17,22,21]\,\,[1,2,15,12,11,8,5,6]\,\,[9,10,23,20,19,16,13,14]$.

\begin{figure}[htpb] 
\centering\epsfig{file=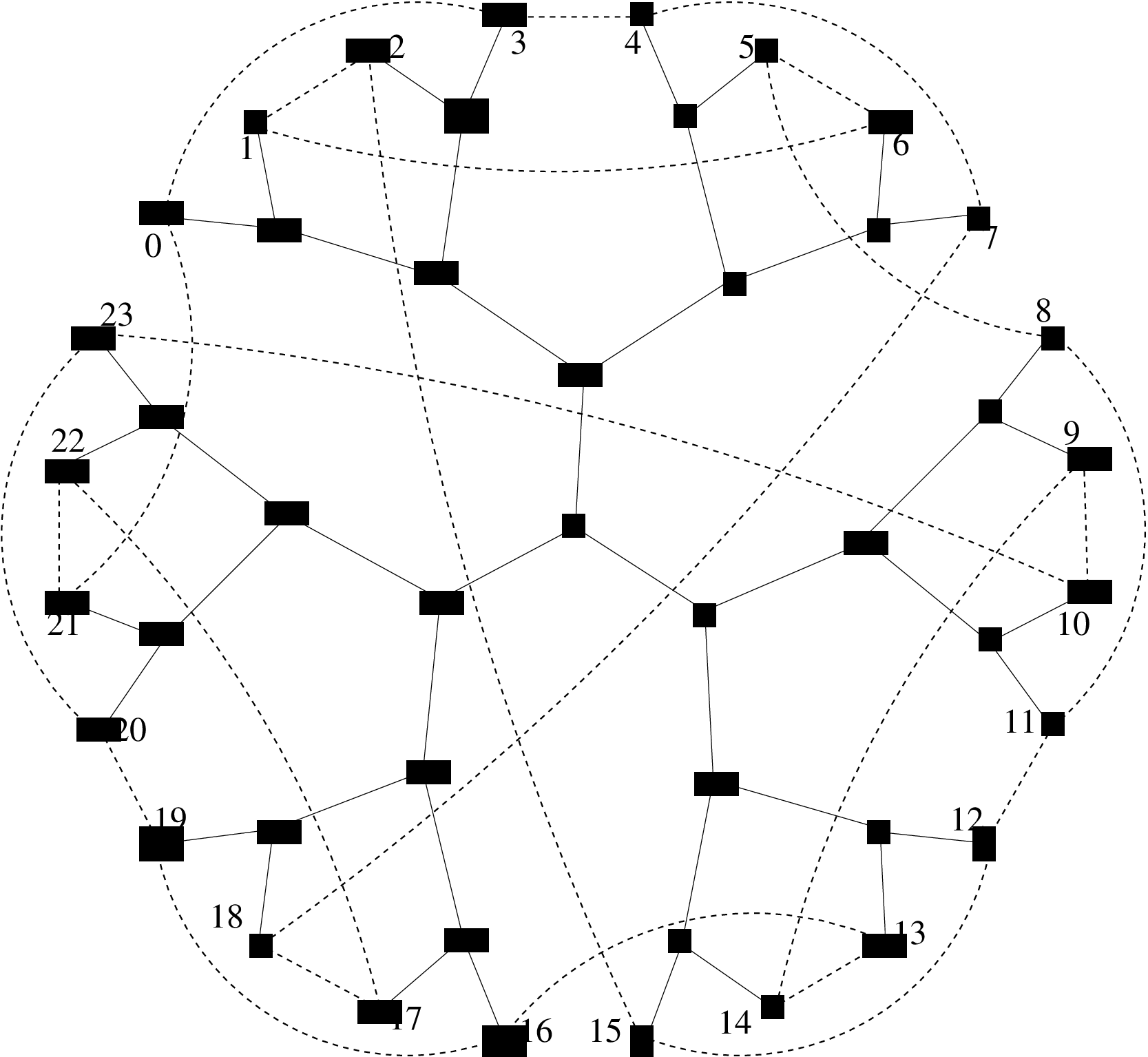,width=4.2in}
\caption{The hist-snark $T(8,8,8)$ with three outer cycles of length $8$.}
\label{f:t888}
\end{figure}

\smallskip

For the sake of completeness we present the adjacency list of $T(8,8,8)$.\\

\begin{small}
$T(8,8,8):0(3,21,24)1(2,6,24)2(15,25)3(4,25)4(7,26)5(6,8,26)6(27)7(18,27)8(11,28)\\9(10,14,28)10(23,29)11(12,29)12(15,30)13(14,16,30)14(31)15(31)16(19,32)17(18,22,32)\\18(33)19(20,33)20(23,34)21(22,34)22(35)23(35)24(36)25(36)26(37)27(37)28(38)29(38)30(39)\\31(39)32(40)33(40)34(41)35(41)36(42)37(42)38(43)39(43)40(44)41(44)42(45)43(45)44(45)$
\end{small}
\\


{\bf A3.} The adjacency lists of the hist-free snarks $X_1$, $X_2$ with $38$ vertices.\\

\begin{small}

$X_1:0(8,12,18)1(5,9,13)2(4,14,20)3(5,7,8)4(5,12)6(7,10,13)7(14)8(15)9(19,22)\\10(18,24)11(26,34,36)12(16)13(16)14(17)15(17,19)16(17)18(21)19(21)20(25,36)21(27)\\22(30,34)23(25,28,31)24(26,37)25(35)26(32)27(29,31)28(29,30)29(32)30(33)31(33)32(33)\\34(35)35(37)36(37)$\\

$X_2:0(8,12,18)1(5,9,13)2(4,14,20)3(5,7,8)4(5,12)6(7,10,13)7(14)8(15)9(19,22)\\10(18,24)11(26,34,36)12(16)13(16)14(17)15(17,19)16(17)18(21)19(21)20(28,34)21(27)\\22(26,37)23(27,30,32)24(25,36)25(30,35)26(33)27(29)28(31,32)29(31,33)30(31)32(33)\\34(35)35(37)36(37)$
\\
\end{small}



\end{document}